\newcommand{\IR}{{\mathbb{R}}}
\newcommand{\IN}{{\mathbb{N}}}
\newcommand{\eChar}{\begin{enumerate}[(i)]}
\newcommand{\eCharR}{\begin{enumerate}[(a)]}
\newcommand{\eBr}{\begin{enumerate}[(1)]}
\newcommand{\AT}{{\mathcal{AT}}}
\newcommand{\KK}{{\mathcal{K}}}
\newcommand{\Id}{{\rm{Id}}}
\title
{
Curvature calculations for antitrees
}
\author{David Cushing, Shiping Liu, Florentin M\"unch, Norbert Peyerimhoff}
\date{\today}
\theoremstyle{plain}
\newtheorem{lemma}{Lemma}[section]
\newtheorem{theorem}[lemma]{Theorem}
\newtheorem{corollary}[lemma]{Corollary}
\theoremstyle{definition}
\newtheorem{definition}{Definition}[section]
\newtheorem{rem}[lemma]{Remark}
\numberwithin{equation}{section}
\begin{document}

\maketitle

\begin{abstract}
  In this article we prove that antitrees with suitable growth properties
  are examples of infinite graphs exhibiting strictly positive
  curvature in various contexts: in the normalized and non-normalized
  Bakry-{\'Emery} setting as well in the Ollivier-Ricci curvature
  case. We also show that these graphs do not have global positive
  lower curvature bounds, which one would expect in view of discrete
  analogues of the Bonnet-Myers theorem. The proofs in the different
  settings require different techniques.
\end{abstract}

\section{Introduction and results}

The main protagonists in this article are \emph{antitrees}. While
these examples had been studied already in 1988, they were given the name
\emph{antitree} in talks by Radoslaw Wojciechowsi
around 2010. A proper definition of antitrees, in their most general
form, appeared first in \cite{KLW13}. Like in the case of a tree, the
vertices of an antitree are partitioned in generations $V_i$ with the
first generation $V_1$ called its \emph{root set}. While trees are
connected graphs with as few connections as possible between
subsequent generations, antitees have the maximal number of
connections. More precisely, antritrees are simple (i.e., no loops and no multiple edges), connected graphs such that
\begin{itemize}
\item[(i)] any root vertex $x \in V_1$ is connected to all vertices in
  $V_2$, and no vertices in $V_k$, $k \ge 3$,
\item[(ii)] any vertex $x \in V_k$, $k \ge 2$, is connected to all vertices
in $V_{k-1}$ and $V_{k+1}$, and no vertices in $V_l$, $|k-l| \ge 2$.
\end{itemize}
Note that this definition allows for the possibility of edges between
vertices of the same generation. We will refer to such edges as
\emph{spherical edges}. Edges between vertices of different
generations are called \emph{radial edges}. Any radial or spherical
edge incident to a vertex in $V_1$ is called \emph{radial} or
\emph{spherical root-edge}, respectively. All other edges are called
\emph{inner edges}.

Antitrees are particularly interesting examples with regards to
stochastic completeness. Section \ref{sec:hist}, provided by Radoslaw
Wojciechowki, gives a more in-depth look at the history of antitrees.
In this article, we investigate curvature properties of
antitrees. Relations between curvature asymptotics and stochastic
completeness were investigated recently in \cite{HuaMunch2017} in the
Bakry-\'Emery setting and in \cite{MunchWoj2017} in the Ollivier-Ricci
curvature setting.

For our curvature considerations, we consider only antitrees where the
induced subgraph of any one generation $V_k$ is complete, i.e., any
two vertices in the same generation are neighbours. For any given
finite or infinite sequence $(a_k)_{1 \le k \le N}$,
$N \in \IN \cup \{\infty\}$, the corresponding unique such antitree
with $|V_k| = a_k$ for all $1 \le k \le N$ is denoted by
$\AT((a_k))$. Note that in the case of a finite antitree, that is
$N < \infty$, (ii) has to be understood in the case $k = N$ that any
vertex $x \in V_N$ is connectd to all vertices in $V_{N-1}$. Later in
this introduction, we will only present results for infinite antitrees
but, since curvature is a local notion, we need only investigate
curvatures of suitable finite antitrees for the proofs. 

\begin{figure}[h]
\begin{center}
\includegraphics[width=0.6\textwidth]{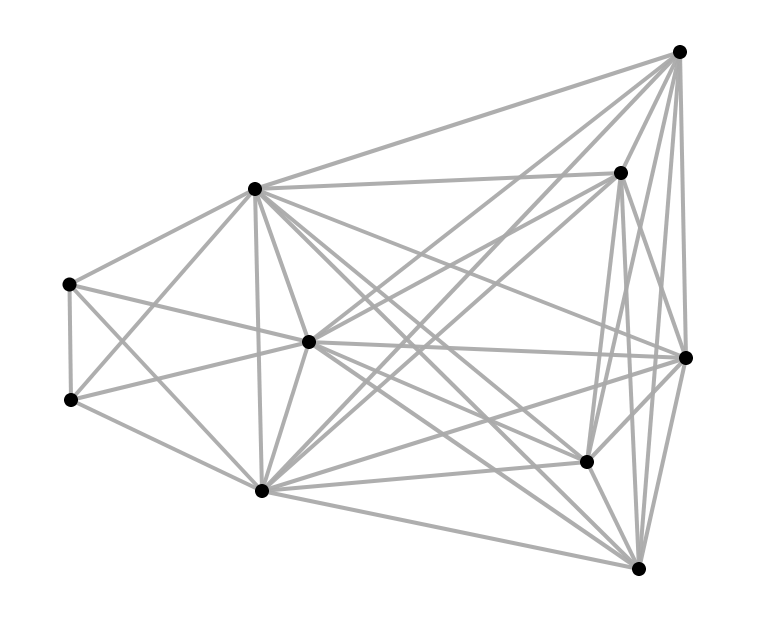}
\end{center}
\caption{The antitree $\mathcal{AT}((2,3,5))$ \label{figure:at235}}
\end{figure}

Two particular curvature notions on graphs have been studied
actively in recent years:
\begin{itemize}
\item \emph{Bakry-\'Emery curvature} taking values on the vertices and based on
Bochner's formula with respect to a suitable graph Laplacian,
\item \emph{Ollivier-Ricci curvature} taking values on the edges and based on
  optimal transport of lazy random walks.
\end{itemize}
Basic graph theoretical notions are introduced in Section
\ref{subsec:basics} and precise definitions of these curvature
concepts are given in Sections \ref{section:BEcurv} and
\ref{section:ORcurv}, respectively.

For both curvature notions there are graph theoretical analogues of
the fundamental Bonnet-Myers Theorem for Riemannian manifolds with
strictly positive Ricci curvature bounded away from zero.

Let us first consider Bakry-{\'E}mery curvature. Generally, on a
combinatorial graph $G= (V,E)$ with vertex set $V$ and edge set $E$,
the graph Laplacian on functions $f: V \to \IR$ is of the form
\begin{equation} \label{eq:lap} 
\Delta f(x) = \frac{1}{\mu(x)} \sum_{y \sim x} (f(y)-f(x)), 
\end{equation}
with a vertex measure $\mu: V \to (0,\infty)$. In this article, we
consider two specific choices of vertex measures:
\begin{itemize}
\item $\mu \equiv 1$, which we refer to as the \emph{non-normalized case},
\item $\mu(x) = d_x$ (the vertex degree of $x \in V$), which we refer
  to as the \emph{normalized case}.
\end{itemize} 
The corresponding discrete Bonnet-Myers theorems in both settings are
as follows:

\begin{theorem}[see \cite{LiuMP2016}] Let $G=(V,E)$ be a
  connected graph satisfying $CD(K,\infty)$ for some $K > 0$ in the
  \emph{non-normalized case} and $d_x \le D$ for all $x \in V$ and
  some finite $D$. Then $G$ is a finite graph and, furthermore,
  $$ {\rm{diam}}(G) \le \frac{2D}{K}. $$
\end{theorem}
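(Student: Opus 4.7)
The plan is to compare the exponential Lipschitz contraction of the heat semigroup $P_t = e^{t\Delta}$, a consequence of $CD(K,\infty)$, against a first-order Taylor expansion of $P_t f$ at $t = 0$, applied to the combinatorial distance function. Since $d_x \le D$ makes the non-normalized Laplacian $\Delta$ a bounded operator on $\ell^\infty(V)$, the semigroup $P_t$ is well-defined and differentiable on bounded functions, so the manipulations below are rigorous even if $G$ were a priori infinite.

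First I would deduce from $\Gamma_2(f) \ge K\,\Gamma(f)$ the semigroup gradient estimate
\[
\Gamma(P_t f)(z) \;\le\; e^{-2Kt}\,P_t\bigl(\Gamma(f)\bigr)(z), \qquad z \in V,\ t \ge 0,
\]
by differentiating $s \mapsto P_s\bigl(\Gamma(P_{t-s}f)\bigr)(z)$ on $[0,t]$ and invoking $CD(K,\infty)$ pointwise. Combined with the elementary bound $|g(u)-g(v)|^2 \le 2\,\Gamma(g)(u)$ for $u \sim v$ and a summation along a geodesic $x = z_0 \sim z_1 \sim \cdots \sim z_L = y$, where $L = d(x,y)$, this yields
\[
|P_t f(x) - P_t f(y)| \;\le\; L\,e^{-Kt}\sqrt{2\,\|\Gamma(f)\|_\infty}.
\]

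Next, I would apply the contraction with $f = d(\cdot, y)$, truncated above some radius $R \ge L$ (this truncation does not affect $\Delta f$ at $x$ or $y$ and keeps $f$ bounded). Then $\|\Gamma(f)\|_\infty \le D/2$, $f(x) - f(y) = L$, $\Delta f(y) = d_y$ (all neighbours of $y$ lie at distance $1$), and $\Delta f(x) \ge -d_x$, so the Taylor expansion gives
\[
P_t f(x) - P_t f(y) \;=\; L + t\bigl(\Delta f(x) - \Delta f(y)\bigr) + O(t^2) \;\ge\; L - 2Dt + O(t^2).
\]
Combining both estimates, expanding $e^{-Kt} = 1 - Kt + O(t^2)$, dividing by $t$ and letting $t \downarrow 0$ produces a linear inequality of the form $LK \le c(D)$; the stated bound $L\,K \le 2D$ follows after optimizing the choice of test function. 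Finiteness of $G$ is then immediate: a connected graph of bounded degree $D$ with bounded diameter has only finitely many vertices.

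The main obstacle is to extract the \emph{sharp} constant $2D/K$: the naive path argument above introduces an extra $\sqrt{D}$ factor because the discrete gradient estimate controls $\sqrt{\Gamma(P_t f)}$ rather than the pointwise Lipschitz constant of $P_t f$ itself, unlike the diffusion setting where the chain rule bridges the two. Closing this gap appears to require either a stronger form of the semigroup gradient estimate (e.g., $\sqrt{\Gamma(P_tf)} \le e^{-Kt}\,P_t\bigl(\sqrt{\Gamma(f)}\bigr)$, which needs additional discrete structure beyond $CD(K,\infty)$), or a direct calculation with a carefully tailored test function whose Laplacian values at $x$ and $y$ balance precisely against the exponential decay rate $K$.
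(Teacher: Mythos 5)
Your proposal correctly identifies the two standard ingredients (the semigroup gradient estimate $\Gamma(P_tf)\le e^{-2Kt}P_t\Gamma(f)$ and the combinatorial distance as test function), but, as you yourself concede in your final paragraph, the argument does not close, and the gap is structural rather than cosmetic. Comparing the two bounds near $t=0$ cannot work: with $f=d(\cdot,y)$ your contraction estimate reads $|P_tf(x)-P_tf(y)|\le L\,e^{-Kt}\sqrt{2\|\Gamma(f)\|_\infty}\le L\sqrt{D}\,e^{-Kt}$, while the Taylor expansion gives $P_tf(x)-P_tf(y)\ge L-2Dt+O(t^2)$. At $t=0$ the first bound is already slack by the factor $\sqrt{D}\ge 1$, so the two curves are not tangent at $t=0$, and dividing by $t$ and letting $t\downarrow 0$ yields nothing (one only recovers $L\le L\sqrt{D}$, not $LK\le 2D$). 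The phrase ``follows after optimizing the choice of test function'' is therefore carrying the entire proof and is not substantiated; no rescaling of $f$ removes the mismatch, because the loss comes from converting $\sqrt{\Gamma}$-control into a pointwise Lipschitz bound along a geodesic, exactly as you diagnose.

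Note that the theorem is only quoted in this paper; the proof is in \cite{LiuMP2016}, and it avoids your obstruction not by a stronger gradient estimate but by different bookkeeping: one never compares the two estimates at small $t$. Writing $f(x)-P_tf(x)=-\int_0^t\Delta P_sf(x)\,ds$ and using Cauchy--Schwarz in the non-normalized setting,
\[
|\Delta g(x)|=\Bigl|\sum_{z\sim x}\bigl(g(z)-g(x)\bigr)\Bigr|\le\sqrt{d_x}\,\sqrt{2\Gamma(g)(x)}\le\sqrt{2D\,\Gamma(g)(x)},
\]
applied to $g=P_sf$ together with the gradient estimate and $\Gamma(f)\le D/2$ gives $|\Delta P_sf|\le D e^{-Ks}$, hence $|f(x)-P_tf(x)|\le D/K$ uniformly in $t$, and likewise at $y$. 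Your path-summed Lipschitz bound is then used only to show $|P_tf(x)-P_tf(y)|\le d(x,y)\sqrt{D}\,e^{-Kt}\to0$ as $t\to\infty$, so the triangle inequality in the limit $t\to\infty$ gives $d(x,y)=|f(x)-f(y)|\le 2D/K$ exactly; the sharp constant is $2\int_0^\infty De^{-Ks}\,ds$. (Boundedness of $f$ is handled by the truncation you already propose, and finiteness of $G$ then follows as you say.) In short: integrate $\Delta P_sf$ over all of $[0,\infty)$ and let the contraction term vanish at $t=\infty$, instead of Taylor-expanding at $t=0$.
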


\begin{theorem}[see \cite{LiuMP2016}] \label{thm:discBM-norm} Let
  $G=(V,E)$ be a connected graph satisfying $CD(K,\infty)$ for some
  $K > 0$ in the \emph{normalized case} (possibly of unbounded vertex
  degree). Then $G$ is a finite graph and, furthermore,
  $$ {\rm{diam}}(G) \le \frac{2}{K}. $$
\end{theorem}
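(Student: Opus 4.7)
The plan is to deduce both the finiteness of $G$ and the diameter bound from the Bakry-{\'E}mery gradient estimate associated with $CD(K,\infty)$: for the heat semigroup $P_t = e^{t\Delta}$, one has $\Gamma(P_t f) \le e^{-2Kt} P_t \Gamma(f)$ pointwise for sufficiently nice $f$, so the carr\'e du champ is exponentially contracted along the heat flow.

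Start with an $f \colon V \to \IR$ that is $1$-Lipschitz with respect to the combinatorial graph distance. In the normalized setting
$$\Gamma(f)(v) = \frac{1}{2 d_v}\sum_{u \sim v}(f(u)-f(v))^2 \le \frac{1}{2}$$
uniformly in $v$, so the gradient estimate gives $\Gamma(P_t f)(v) \le \tfrac12 e^{-2Kt}$. The observation peculiar to the normalized case is the pointwise bound $|\Delta g(v)|^2 \le 2\,\Gamma(g)(v)$, a one-line Cauchy-Schwarz applied to $\Delta g(v) = \tfrac{1}{d_v}\sum_{u\sim v}(g(u)-g(v))$, which holds with no assumption on vertex degrees. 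Applying it to $g = P_t f$ yields $|\Delta P_t f(v)| \le e^{-Kt}$ at every $v \in V$.

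For the diameter bound, I fix $x,y \in V$ and write
$$f(x) - f(y) = -\int_0^\infty \bigl(\Delta P_t f(x) - \Delta P_t f(y)\bigr)\, dt.$$
This identity is legitimate once $P_t f(x) - P_t f(y) \to 0$ as $t \to \infty$, which holds because $\Gamma(P_t f) \to 0$ uniformly and $G$ is connected, so $P_t f$ is pointwise Cauchy in $t$ and its limit is locally constant hence globally constant. Bounding the integrand by $2 e^{-Kt}$ yields $|f(x)-f(y)| \le 2/K$, and specializing $f$ to (a bounded truncation of) the distance function $d(x,\cdot)$ gives $d(x,y) \le 2/K$.

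The main obstacle is the finiteness of $G$, since in the normalized case without any degree bound a diameter estimate alone does not preclude infinitely many vertices in a ball. I would attack this by testing the $CD(K,\infty)$ inequality against characteristic functions $\mathbf{1}_{\{v\}}$ and computing $\Gamma_2(\mathbf{1}_{\{v\}})(v)$ explicitly, aiming to extract a quantitative upper bound on $d_v$ (and on the degrees of its neighbours) purely in terms of $K$; combined with the diameter bound this forces $|V| < \infty$. The parallel analytic subtlety of justifying the $t \to \infty$ convergence and the integration identity for unbounded $f$ in the locally finite but unbounded-degree regime is then resolved by monotone exhaustion of $G$ by finite subgraphs and passage to the limit.
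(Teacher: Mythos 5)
The paper does not prove this theorem; it is quoted from \cite{LiuMP2016}, and your semigroup argument for the diameter bound is essentially the proof given there: the gradient estimate $\Gamma(P_tf)\le e^{-2Kt}P_t\Gamma(f)$, the Cauchy--Schwarz bound $|\Delta g|^2\le 2\Gamma(g)$ (valid with constant $2$ precisely because $\mu(x)=d_x$), the resulting decay $|\Delta P_tf|\le e^{-Kt}$, and integration of $\frac{d}{dt}P_tf=\Delta P_tf$ applied to a truncation of $d(x,\cdot)$. This part is correct, and your justification of $P_Tf(x)-P_Tf(y)\to 0$ via the uniform decay of $\Gamma(P_tf)$ along a finite connecting path is fine (for the normalized Laplacian, which is bounded on $\ell^\infty$, the semigroup and the interpolation argument behind the gradient estimate are unproblematic).

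The finiteness step, however, contains a genuine gap: the strategy of testing $CD(K,\infty)$ against indicator functions to extract an upper bound on $d_v$ \emph{in terms of $K$ alone} cannot succeed. The complete graphs $K_n$ with the normalized Laplacian satisfy $CD\bigl(\tfrac{n+2}{2(n-1)},\infty\bigr)$, hence all of them satisfy $CD(\tfrac12,\infty)$, while their vertex degrees $n-1$ are unbounded; so no function of $K$ can dominate $d_v$. Fortunately no such bound is needed. In the setting of this paper all graphs are locally finite (each $d_v<\infty$, merely without a uniform bound), and then finiteness is immediate from the diameter bound you already proved: by induction, every combinatorial ball $B_r(x)$ in a locally finite graph is a finite set, and $V=B_{\lfloor 2/K\rfloor}(x)$. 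You should replace the indicator-function scheme by this one-line observation; only if one wished to allow vertices of infinite degree (as in the general weighted setting of \cite{LiuMP2016}) would a separate argument for finiteness be required, and it would have to proceed differently from what you propose.
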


Ollivier-Ricci curvature depends upon an idleness parameter
$p \in [0,1]$ describing the laziness of the associated random
walk. Here, the discrete Bonnet-Myers theorem takes the following
form:

\begin{theorem}[see \cite{Oll2009}] Let $G=(V,E)$ be a connected graph
  satisfying $\kappa_p(x,y) \ge K > 0$ for all $x \sim y$ and a fixed
  idleness $p \in [0,1]$.  Then $G$ is a finite graph and,
  furthermore, 
  \begin{equation} \label{eq:DBM} 
 {\rm{diam}}(G) \le \frac{2(1-p)}{K}. 
  \end{equation}
\end{theorem}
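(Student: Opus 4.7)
The plan is to deduce the diameter bound from a standard two-sided estimate on the Wasserstein distance $W_1(\mu_x^p,\mu_y^p)$ between the lazy random walk probability measures at two arbitrary vertices $x,y$, and then derive finiteness from local finiteness of the graph. The key tools are the triangle inequality for $W_1$ (which gives an upper bound out of the local curvature hypothesis) and Kantorovich--Rubinstein duality with a well-chosen $1$-Lipschitz test function (which gives a matching lower bound).

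First I would fix $x,y \in V$ and a geodesic $x = x_0 \sim x_1 \sim \cdots \sim x_n = y$ of length $n = d(x,y)$. From the assumption $\kappa_p(x_i,x_{i+1}) \ge K$ on each geodesic edge, the definition of Ollivier-Ricci curvature gives $W_1(\mu_{x_i}^p,\mu_{x_{i+1}}^p) \le 1-K$. Applying the triangle inequality for $W_1$ along the geodesic then yields the upper bound
\begin{equation*}
W_1(\mu_x^p,\mu_y^p) \;\le\; \sum_{i=0}^{n-1} W_1(\mu_{x_i}^p,\mu_{x_{i+1}}^p) \;\le\; (1-K)\, d(x,y).
\end{equation*}

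Next I would produce a lower bound on the same quantity using Kantorovich--Rubinstein duality. The function $f(z) := d(x,z)$ is $1$-Lipschitz on $V$, so $W_1(\mu_x^p,\mu_y^p) \ge \int f \, d\mu_y^p - \int f\, d\mu_x^p$. Since every neighbour of $x$ is at distance $1$ from $x$, and every neighbour of $y$ is at distance at least $d(x,y)-1$ from $x$, a direct computation with $\mu_z^p = p\delta_z + (1-p) \cdot \frac{1}{d_z}\sum_{w\sim z} \delta_w$ gives $\int f\, d\mu_x^p = 1-p$ and $\int f\, d\mu_y^p \ge d(x,y) - (1-p)$, so
\begin{equation*}
W_1(\mu_x^p,\mu_y^p) \;\ge\; d(x,y) - 2(1-p).
\end{equation*}

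Combining the two bounds yields $d(x,y) - 2(1-p) \le (1-K)\, d(x,y)$, which rearranges to $d(x,y) \le 2(1-p)/K$, and since $x,y$ were arbitrary this is the claimed diameter bound \eqref{eq:DBM}. For finiteness, note that the curvature hypothesis is usually stated under local finiteness; any locally finite graph with finite diameter is automatically finite, since $V$ coincides with a single ball of finite radius around any chosen vertex. The only subtle step is the lower bound via Kantorovich--Rubinstein: one has to verify that the contributions of the idle mass $p$ at $x$ and $y$ precisely account for the $2(1-p)$ defect and that the factor $1-p$ on the right-hand side of \eqref{eq:DBM} is therefore sharp in the calculation, which is exactly why the bound degenerates as $p \to 1$.
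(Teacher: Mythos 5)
Your argument is correct: the upper bound $W_1(\mu_x^p,\mu_y^p)\le(1-K)\,d(x,y)$ from the $W_1$-triangle inequality along a geodesic, the lower bound $W_1(\mu_x^p,\mu_y^p)\ge d(x,y)-2(1-p)$ via the $1$-Lipschitz function $z\mapsto d(x,z)$, and the observation that a locally finite graph of finite diameter is finite together give exactly \eqref{eq:DBM}. The paper states this theorem without proof, citing Ollivier; your proof is precisely the standard argument from that reference (where $1-p$ plays the role of the jump $W_1(\delta_x,\mu_x^p)$), so there is nothing to compare and no gap to report.
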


These results give rise to the following natural questions:
\begin{itemize}
\item Do there exist examples of infinite connected graphs with
  strictly positive curvature? (That is, relaxing the condition of a
  uniform strictly positive lower curvature bound.)
\item In the non-normalized case, doe there exist an infinite
  connected graphs satisfying $CD(K,\infty)$ for $K > 0$ of unbounded
  vertex degree?
\end{itemize}

This paper provides a positive answer to the first question. In fact,
we show that antitrees $\AT((a_k))$ with suitable growth properties of
the infinite sequence $(a_k)$ have strictly positive curvature for all
curvature notions mentioned above. More precisely, we have the following
in the Bakry-{\'E}mery curvature case:

\begin{theorem} In both the normalized and non-normalized setting, the
  infinite antitree $\AT((k))$ satisfies $CD(K_x,\infty,x)$
  for all vertices $x$ with a family of constants $K_x >
  0$ depending only on the generation of $x$. Furthermore, 
  $$ \liminf_{k \to \infty,\, x \in V_k} K_x = 0. $$
\end{theorem}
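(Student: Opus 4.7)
Bakry-\'Emery curvature $CD(K,\infty,x)$ depends only on the induced subgraph on $B_2(x)$ together with the inherited vertex degrees, so the optimal constant $K_x$ is determined by the pointed $2$-ball. Since the automorphism group $\prod_{j\ge 1}S_j$ of $\AT((k))$ acts transitively on each $V_k$, two vertices of the same generation have isomorphic pointed $2$-balls and $K_x$ depends only on the index $k$ of the generation of $x$; write $K_k:=K_x$ for $x\in V_k$. For $k\ge 3$ one has $B_2(x)=\{x\}\cup V_{k-2}\cup V_{k-1}\cup(V_k\setminus\{x\})\cup V_{k+1}\cup V_{k+2}$, and the pointwise stabiliser of $x$ acts as the full symmetric group on each of the five remaining orbits independently (with obvious modifications for $k\le 2$).

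\textbf{Positivity $K_k>0$.} I would encode $\Gamma_2 f(x)\ge K\Gamma f(x)$ as positive semidefiniteness of a symmetric matrix $M_K=M^{\Gamma_2}-K M^{\Gamma}$ acting on functions $f:B_2(x)\to\IR$. Since both quadratic forms are invariant under the stabiliser, the isotypic decomposition block-diagonalises $M_K$ into a trivial sector (functions constant on each orbit) and, for every orbit $O$ with $|O|>1$, one standard sector on which both quadratic forms act as scalar multiples of the identity. Each standard sector produces a single scalar inequality $\alpha_O-K\beta_O\ge 0$ with $\alpha_O,\beta_O>0$ readable from Bochner's formula, automatically satisfied for all sufficiently small $K>0$. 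The trivial sector gives a $6\times 6$ symmetric matrix in the orbit averages; since $\bar f_{k\pm 2}$ appear in $\Gamma_2$ but not in $\Gamma$, a Schur complement eliminating them reduces the problem to a $4\times 4$ matrix in $(f(x),\bar f_{k-1},\bar f_k,\bar f_{k+1})$ whose positive definiteness for an explicitly chosen positive $K=K_k$ is certified by its leading principal minors.

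\textbf{Upper bound $\liminf K_k=0$.} For this I use the radial linear test function $f(y)=j$ for $y\in V_j$. In the non-normalized case $\Delta f\equiv 2$ on every interior vertex, so $\Gamma(f,\Delta f)(x)=0$ and hence $\Gamma_2 f(x)=\tfrac12\Delta\Gamma f(x)$. Using $\Gamma f(y)=j$ for vertices $y\in V_j$ near $x$, a short calculation gives $\Gamma f(x)=k$ and $\Gamma_2 f(x)=1$, whence $K_k\le 1/k\to 0$. A parallel computation scaled by the measures $\mu(y)=d_y=3j-1$ handles the normalized case, producing again a ratio of order $1/k$.

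The principal difficulty is the positivity verification in the trivial sector: although symmetry reduces to a $4\times 4$ matrix, its entries depend on $k$ through both the orbit sizes $|V_j|=j$ and, in the normalized case, through the variable vertex measures $d_y=3j-1$ across the six different orbits, so one must track rational functions of $k$ carefully and show that a strictly positive $K_k$ exists for every fixed $k$ without any $k$-uniform lower bound.
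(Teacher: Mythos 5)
Your orbit/isotypic reduction is precisely the mechanism the paper uses (its Lemma \ref{lem:usefact}): the stabiliser of $x$ splits $\Gamma_2-K\Gamma$ into standard sectors, each a scalar inequality, plus a small matrix in the orbit averages. But two things go wrong in your execution. First, the standard-sector inequalities are \emph{not} automatic: in the paper's notation the eigenvalue coming from the inward orbit is $\alpha_4=3a+3b+3c-d+(3a+4b+3c)\epsilon_-$, which is positive only under the growth condition $d<3(a+b+c)$; this holds for $\AT((k))$ but has to be checked, not read off. Second, and more seriously, your proposed certificate for the trivial sector cannot work as stated: $\Gamma_2-K\Gamma$ annihilates constant functions for every $K$, so the $4\times4$ Schur complement in $(f(x),\bar f_{k-1},\bar f_k,\bar f_{k+1})$ always contains $(1,1,1,1)^\top$ in its kernel, is therefore singular, and is never positive definite — its leading principal minors cannot all be positive. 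What must actually be proved is positive semidefiniteness with \emph{exactly} one-dimensional kernel (equivalently positivity on the complement of constants), and this is the entire computational content of the theorem: the paper does it by writing out the reduced $6\times6$, $5\times5$ and $3\times3$ matrices for $x\in V_k$ ($k\ge3$), $x\in V_2$ and $x\in V_1$ respectively, and verifying (by Maple) that the characteristic polynomial has alternating coefficients as polynomials in $n$. You explicitly defer this verification, so the assertion $K_k>0$ is not established by your proposal.

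\textbf{On $\liminf K_k=0$.} Here your route is correct and genuinely different from — in fact cleaner than — the paper's. In the non-normalized case the radial function $f(y)=j$ on $V_j$ does satisfy $\Delta f\equiv 2$ and $\Gamma(f)(y)=j$, whence $\Gamma_2(f)(x)=\tfrac12\Delta\Gamma(f)(x)=1$ and $K_k\le 1/k$; the paper instead needs a Maple computation showing that the coefficient $p_1(\delta,n)$ of a characteristic polynomial has negative leading term $-240\delta n^9$. In the normalized case the same test function gives $\Gamma(f)(x)=k/(3k-1)\to 1/3$ while $\Gamma_2(f)(x)\sim 2/(9k^2)$, so the ratio decays like $1/k^2$ rather than the $1/k$ you state; the conclusion still follows, and is more quantitative than the paper's argument, which merely invokes the discrete Bonnet--Myers theorem to get a contradiction from a uniform positive bound. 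So the second assertion is fine; the gap is entirely in the first.
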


\begin{rem} \label{rem:bakem_examples}
  In fact, the method of proof relies on some Maple calculations which
  can be extended to also provide the following results (without going
  into the details):
  \begin{itemize}
  \item[(i)] \emph{Linear growth:} The same curvature results hold true for
    the infinite antitrees\\ $\AT((1+(k-1)t))$ with arbitrary
    $t \in \IN$.
  \item[(ii)] \emph{Exponential growth:} The same curvature results
    hold true for the infinite antitree $\AT((2^{k-1}))$ in the
    normalized case and fails to satisfy $CD(0,\infty)$ in the
    non-normalized case.
  \end{itemize}
\end{rem}

Due to Bakry-{\'E}mery curvature being a local property, in order to
calculate the curvatures $\KK_{G,x}(\infty)$ of vertices $x$ \emph{in the
first two generations} of $G = \AT((2^{k-1}))$ as defined later in
\eqref{eq:KinfCD}, it is sufficient to consider the graph presented in
Figures \ref{figure:cur_norm} and \ref{figure:cur_non_norm} (spherical
edges of $2$-spheres around a vertex do not contribute to the
curvature, see \cite{CLP2016}). These figures are in agreement with
the statements in Remark \ref{rem:bakem_examples}(ii).

\begin{figure}[h]
\begin{minipage}[t]{0.5\linewidth}
\centering
\includegraphics[width=\textwidth]{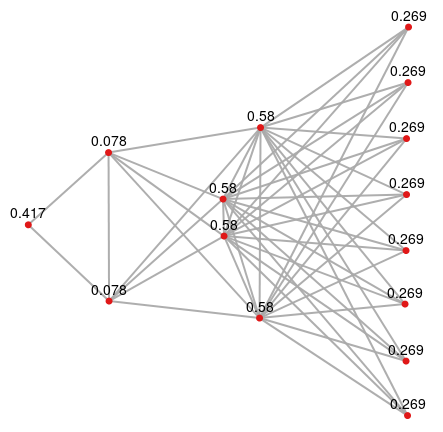}
\caption{Normalized curvature $\KK_{G,x}(\infty)$ \label{figure:cur_norm}}
\end{minipage}
\hfill
\begin{minipage}[t]{0.56\linewidth}
\centering
\includegraphics[width=.85\textwidth]{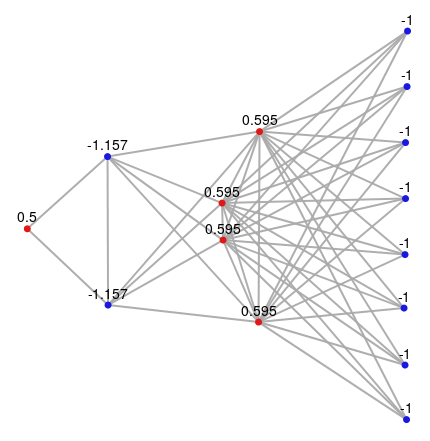}
\caption{Non-normalized curvature $\KK_{G,x}(\infty)$ \label{figure:cur_non_norm}}
\end{minipage}
\end{figure}

Now we consider Ollivier-Ricci curvature. Here our main result is the
following:

\begin{theorem}
  Let $G = \AT((a_k))$ be an infinite antitree with $1 = a_1$ and
  $a_{k+1} \ge a_k$ for all $k \in \IN$ and $x,y$ be neighbouring
  vertices in $G$. 
  \begin{itemize}
  \item \emph{Radial root edges:} If $x \in V_1$ and $y \in V_2$:
  $$ \kappa_p(x,y) = \begin{cases} \frac{a_2-1}{a_2+a_3}+\frac{a_2+2a_3+1}{a_2+a-3}p, & \text{if $p \in \left[ 0, \frac{1}{a_2+a_3+1}\right]$,} \\[.2cm]
\frac{a_2+1}{a_2+a_3}(1-p), & \text{if $p \in \left[\frac{1}{a_2+a_3+1},1\right]$.} \end{cases} $$
\item \emph{Radial edges:} If $x \in V_k$ and $y \in V_{k+1}$,
  $k \ge 2$, $p \in [0,1]$:
  $$ \kappa_p(x,y) = \left( \frac{2a_k+a_{k+1}-1}{a_k+a_{k+1}+a_{k+2}-1} -
\frac{2a_{k-1}+a_k-1}{a_{k-1}+a_k+a_{k+1}-1} \right) (1-p). $$
\item \emph{Spherical edges:} If $x,y \in V_k$, $x \neq y$, $k \ge 2$:
  $$ \kappa_p(x,y) = \begin{cases} \frac{a_{k-1}+a_k+a_{k+1}-2}{a_{k-1}+a_k+a_{k+1}-1} + \frac{a_{k-1}+a_k+a_{k+1}}{a_{k-1}+a_k+a_{k+1}-1}p, &
\text{if $p \in \left[0, \frac{1}{a_{k-1}+a_k+a_{k+1}}\right]$,} \\[.2cm]
\frac{a_{k-1}+a_k+a_{k+1}}{a_{k-1}+a_k+a_{k+1}-1}(1-p), & 
\text{if $p \in \left[\frac{1}{a_{k-1}+a_k+a_{k+1}},1\right]$.} \end{cases} $$
  \end{itemize}
\end{theorem}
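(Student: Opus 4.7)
The plan is to compute $W_1(\mu_x^p,\mu_y^p)$ directly, since $\kappa_p(x,y)=1-W_1$ whenever $x\sim y$, via Kantorovich--Rubinstein duality combined with a matching transport plan. In each of the three cases the measures $\mu_x^p$ and $\mu_y^p$ are supported on a short window of generations ($V_1\cup V_2\cup V_3$ for root edges; $V_{k-1}\cup\cdots\cup V_{k+2}$ for inner radial edges; the closed $1$-neighborhood of $\{x,y\}$ for spherical edges), and since each $V_j$ induces a complete subgraph and consecutive generations are fully bipartitely connected, any $1$-Lipschitz function $\phi$ is essentially determined by its generation values $\phi_j$ subject to $|\phi_j-\phi_{j+1}|\le 1$, with the two distinguished vertices $x,y$ allowed to deviate by at most one unit from their generation value. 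This reduces the dual problem to a small, finite-dimensional linear program.

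For \textbf{spherical edges} ($x,y\in V_k$) one sees immediately that $\mu_x^p$ and $\mu_y^p$ agree on every vertex except $x$ and $y$, where the values are swapped between $p$ and $(1-p)/d$ with $d=a_{k-1}+a_k+a_{k+1}-1$; transport along the edge $xy$ gives $W_1=|p-(1-p)/d|$, producing the piecewise formula with transition at $p=1/(d+1)$. For \textbf{radial inner edges} ($k\ge 2$) I would test the monotone profile $(\phi_{k-1},\phi_k,\phi_{k+1},\phi_{k+2})=(1,0,-1,-2)$, which is $1$-Lipschitz and is matched by the rightward transport plan pushing the cumulative excess mass $F_j=\sum_{i\le j}\Delta_i$ across each generation boundary; telescoping reproduces the claimed formula, linear in $(1-p)$. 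For \textbf{radial root edges}, the sign of $\mu_x^p(x)-\mu_y^p(x)=p-(1-p)/(a_2+a_3)$ changes at $p=1/(a_2+a_3+1)$ and forces a case split: above the threshold the profile $(1,0,-1)$ on $(V_1,V_2,V_3)$ is optimal (matched by pushing all excess rightward to $V_3$), while below it $(-1,0,-1)$ wins (matched by supplying both $x$ and $V_3$ from $V_2$).

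The main technical obstacle I anticipate is the inner radial case, where one must show that a \emph{single} monotone profile is optimal throughout the range $p\in[0,1]$---no further phase transitions---even though the individual generation imbalances $\Delta_j=\mu_x^p(V_j)-\mu_y^p(V_j)$ themselves change sign with $p$. Admissibility of the rightward plan requires all three cumulative flows $F_{k-1},F_k,F_{k+1}$ to be nonnegative; the only nontrivial one, $F_k\ge 0$, boils down after clearing denominators to the elementary inequality $(a_{k-1}+a_k-1)(a_{k+1}+a_{k+2}-1)\ge a_k a_{k+1}$, which rewrites as $(a_{k-1}-1)(a_{k+1}+a_{k+2}-1)+a_k(a_{k+2}-1)\ge 0$ and therefore holds using only $a_{k-1},a_{k+2}\ge 1$. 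Ruling out the competing dual profile (the one that would arise if $a_{k+1}/d_x+a_k/d_y>1/(1-p)$) reduces to the same inequality, and once this is in hand both the root and spherical cases are essentially forced by the support structure of the measures.
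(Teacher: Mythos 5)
Your proposal is correct in substance and shares the paper's core technique (collapse the measures to generation-level representatives, then exhibit a transport plan and a matching $1$-Lipschitz potential), but it differs in how the dependence on the idleness $p$ is handled, and in one case it is genuinely slicker. The paper computes the curvature only on the first linear segment (or only $\kappa_0$) and then extends to all of $[0,1]$ by citing structural results on the idleness function from \cite{BCLMP}: Theorem 4.4 for the last linear segment of the root-edge case, Lemma 4.2 (an optimal potential with $\phi(x)-\phi(y)=1$ forces global linearity) for the inner radial case, and Theorem 5.3 (the $d_x=d_y$ case) for spherical edges. You instead produce an explicit plan--potential pair in each $p$-regime, which makes the argument self-contained at the price of having to verify feasibility of the rightward plan across the whole range; your identification of the crux inequality $(a_{k-1}+a_k-1)(a_{k+1}+a_{k+2}-1)\ge a_k a_{k+1}$, and its factorization using only $a_{k-1},a_{k+2}\ge 1$, is correct and matches what is implicitly needed for the paper's potential $(3,2,1,0)$ to be optimal. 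For spherical edges your observation that $\mu_x^p$ and $\mu_y^p$ agree off $\{x,y\}$, so that $W_1=\lvert p-(1-p)/d\rvert$ outright, is cleaner than the paper's route through $\kappa_0$ and $\kappa_{LLY}$. Two small points to tighten: the admissibility of the pure rightward plan is not only the aggregate conditions $F_j\ge 0$ but also the vertex-level (Hall-type) conditions at the distinguished vertices --- e.g.\ for small $p$ the deficit $(1-p)/d_y-p$ at $x\in V_k$ must be coverable by the inflow $F_{k-1}=(1-p)a_{k-1}/d_x$ from $V_{k-1}$, which holds because $a_{k-1}\le a_{k+2}$ gives $a_{k-1}d_y\ge d_x$ (the paper checks exactly this); and your blanket claim that every $1$-Lipschitz function is determined by generation values is not needed and can be dropped, since the exhibited potential already certifies the lower bound.
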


Let us consider special cases:



\begin{corollary}[Linear growth] Let $G = \AT((1+(k-1)t))$, $ t \in \IN$ arbitrary. Then
$$
\kappa_0(x,y) = \begin{cases}
\frac{t}{3t+2} & \text{for $x \in V_1$, $y \in V_2$,} \\
\frac{6t^2}{(3kt+2)(3kt+2-3t)} & \text{for $x \in V_k$, $y \in V_{k+1}$,} \\ 
1 - \frac{1}{3kt+2-3t} & \text{for $x,y \in V_k$, $x \neq y$, $k \ge 2$.} 
\end{cases}
$$
In particular, $\kappa_0$ of radial edges decays asymptotically like
$\frac{2}{3k^2}$ as $k \to \infty$.
\end{corollary}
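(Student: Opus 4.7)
The plan is to specialize the three-case formula in the previous theorem to the linear-growth sequence $a_k = 1 + (k-1)t$ at idleness $p = 0$, and then read off the leading-order behaviour of the radial case.

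First I would record the three auxiliary expressions that appear in the theorem for $a_k = 1 + (k-1)t$, namely
\[
a_k + a_{k+1} + a_{k+2} - 1 = 2 + 3kt,\qquad a_{k-1} + a_k + a_{k+1} - 1 = 2 + 3(k-1)t,
\]
together with $2a_k + a_{k+1} - 1 = 2 + (3k-2)t$ and $2a_{k-1} + a_k - 1 = 2 + (3k-5)t$. The radial-root case with $x \in V_1$, $y \in V_2$ then becomes immediate: at $p=0$ the theorem gives $\kappa_0(x,y) = (a_2-1)/(a_2+a_3) = t/(3t+2)$. The spherical case at $p=0$ is also one line: $\kappa_0(x,y) = 1 - 1/(a_{k-1}+a_k+a_{k+1}-1) = 1 - 1/(3kt+2-3t)$.

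The only case that requires a short calculation is the inner radial edge, $x \in V_k$, $y \in V_{k+1}$ with $k \ge 2$. Substituting into
\[
\kappa_0(x,y) = \frac{2a_k+a_{k+1}-1}{a_k+a_{k+1}+a_{k+2}-1} - \frac{2a_{k-1}+a_k-1}{a_{k-1}+a_k+a_{k+1}-1}
\]
and clearing denominators over $(3kt+2)(3kt+2-3t)$, I would expand the two cross-products
\[
(2+(3k-2)t)(2+3(k-1)t)\quad\text{and}\quad (2+(3k-5)t)(2+3kt).
\]
A direct expansion shows that both share the constant term $4$ and the linear term $(12k-10)t$, so these cancel; the quadratic terms $(9k^2-15k+6)t^2$ and $(9k^2-15k)t^2$ differ only by $6t^2$, yielding the stated numerator $6t^2$.

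For the asymptotic claim, I would simply note that $(3kt+2)(3kt+2-3t) = 9k^2 t^2 (1 + O(1/k))$, so
\[
\kappa_0(x,y) = \frac{6t^2}{(3kt+2)(3kt+2-3t)} \sim \frac{6t^2}{9k^2 t^2} = \frac{2}{3k^2} \qquad\text{as } k \to \infty.
\]
There is no genuine obstacle: the only real work is the single algebraic cancellation that eliminates the constant and linear terms in the radial-edge numerator, which is precisely what makes the curvature decay like $1/k^2$ rather than remaining bounded below.
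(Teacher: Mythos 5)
Your proposal is correct and coincides with what the paper intends: the corollary is stated as a direct specialization of the preceding theorem at $p=0$ with $a_k = 1+(k-1)t$ (the paper gives no separate proof), and your algebra — in particular the cancellation of the constant and linear terms leaving the numerator $6t^2$, and the resulting $\tfrac{2}{3k^2}$ asymptotics — checks out.
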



\begin{corollary}[Exponential growth] We have for $G = \AT((r^{k-1})$,
  $r \in \IN$:
$$
\kappa_0(x,y) = \begin{cases}
\frac{r-1}{r(r+1)} & \text{for $x \in V_1$, $y \in V_2$,} \\
\frac{(r-1)^2(r+1)r^{k-2}}{(r^{k}+r^{k-1}+r^{k-2}-1)(r^{k+1}+r^k+r^{k-1}-1)} & \text{for $x \in V_k$, $y \in V_{k+1}$,} \\ 
1 - \frac{1}{r^k+r^{k-1}+r^{k-2}-1} & \text{for $x,y \in V_k$, $x \neq y$, $k \ge 2$.} 
\end{cases}
$$
In particular, $\kappa_0$ of radial edges decays asymptotically like
$\frac{1}{r^k}$ as $k \to \infty$.
\end{corollary}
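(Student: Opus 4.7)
The plan is to specialize the preceding theorem to $a_k = r^{k-1}$ and evaluate at $p = 0$. For each of the three edge types I first verify that $p = 0$ lies in the first branch of the relevant piecewise formula (it always does, since the upper endpoint of the first branch is positive), then substitute and simplify.

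The two easy cases I would dispatch first. For the radial root edge, substituting $a_2 = r$ and $a_3 = r^2$ into $(a_2-1)/(a_2+a_3)$ gives $(r-1)/(r(r+1))$ immediately. For a spherical edge in $V_k$, setting $S_k := a_{k-1}+a_k+a_{k+1} = r^{k-2}+r^{k-1}+r^k$ yields $\kappa_0 = (S_k-2)/(S_k-1) = 1 - 1/(S_k-1)$. Neither requires any manipulation beyond substitution.

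The only step demanding real algebra is the radial edge between $V_k$ and $V_{k+1}$ for $k \ge 2$. My plan is to abbreviate $A := r^{k-2}$, $\alpha := r+2$ and $\beta := r^2+r+1$, so that $2a_k+a_{k+1} = rA\alpha$, $a_k+a_{k+1}+a_{k+2} = rA\beta$, $2a_{k-1}+a_k = A\alpha$ and $a_{k-1}+a_k+a_{k+1} = A\beta$. The curvature then becomes $\frac{rA\alpha-1}{rA\beta-1} - \frac{A\alpha-1}{A\beta-1}$, and when we cross-multiply, the $rA^2\alpha\beta$ contributions cancel, leaving numerator $A(r-1)(\beta-\alpha) = r^{k-2}(r-1)^2(r+1)$ (using $\beta-\alpha = r^2-1$) and denominator $(r^{k+1}+r^k+r^{k-1}-1)(r^k+r^{k-1}+r^{k-2}-1)$, matching the stated formula. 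The asymptotic $\kappa_0 \sim C r^{-k}$ for radial edges is then clear, since the numerator grows like $r^k$ up to a constant while the denominator is of order $r^{2k+1}$. The only real obstacle is this algebraic cancellation, and the abbreviations $A,\alpha,\beta$ are precisely what make it transparent; without them one faces a messy but routine identity.
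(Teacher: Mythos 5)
Your proposal is correct and follows exactly the route the paper intends: the corollary is stated without proof as a direct specialization of the preceding theorem at $p=0$ with $a_k = r^{k-1}$, and your substitution $A=r^{k-2}$, $\alpha=r+2$, $\beta=r^2+r+1$ correctly produces the cancellation $A(r-1)(\beta-\alpha)=r^{k-2}(r-1)^2(r+1)$ in the radial case, with the other two cases and the $r^{-k}$ asymptotics being immediate. Nothing is missing.
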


\begin{rem} Note that for any finite sequence $(a_k)_{1 \le k \le N}$,
  $N \ge 2$, with $1 = a_1$ and $a_{k+1} \ge a_k$ for all
  $1 \le k \le N$, we can find a large enough $a_{N+1} \ge a_N$ such that
  $\kappa_0(x,y) < 0$ for $x \in V_{N-1}$ and $y \in V_N$.
\end{rem}

The paper is organised as follows: We start with some historical
comments on antitrees in Section \ref{sec:hist} which was provided by
Rados\l{}aw Wojciechowski. Section \ref{sec:basics} introduces the
readers into Bakry-{\'E}mery curvature and Ollivier-Ricci
curvature. The following two Sections \ref{sec:becalcs} and
\ref{sec:ollcalcs} present the concrete curvature investigations in
both settings. The Appendices \ref{app:thm:atabcde-BE},
\ref{app:thm:atbcde-BE}, and \ref{app:thm:curv0BE} provide the Maple
code used for the results in Section \ref{sec:becalcs}.

{\bf{Acknowledgement:}} We are grateful to Radoslaw Wojciechowski,
Matthias Keller, and Jozef Dodziuk for providing useful information on
antitrees. Some figures in this article are based on the curvature
calculator by David Cushing and George Stagg (see \cite{CKLLS2017}).

\section{A (partial) history of antitrees} \label{sec:hist}

To our knowledge, the first known appearance of an antitree is the
case of $|S_r| = r+1$ in the article of Dodziuk and Karp
\cite{DK88}. They study the normalized Laplacian $\Delta$ and give
conditions for transience of the simple random walk in terms of
$r \Delta r$ where $r$ is the distance to a vertex. It appears in
\cite[Example 2.5]{DK88} as a case of a transient graph with bottom of
the spectrum $0$ whose Green's function decays like $1/r$. The same
antitree appears in the article of Weber \cite{Web10}. Weber extends
the result of Dodziuk and Mathai \cite{DM06} concerning the stochastic
completeness of the semigroup associated to the non-normalized
Laplacian $\Delta$. Indeed, Dodziuk/Mathai prove stochastic
completeness in the case of bounded vertex degree. Weber improves this
result to give stochastic completeness in the case of $\Delta r \ge K$
for some constant $K$. The antitree mentioned above is then given as
an example of a graph whose vertex degree is unbounded but which
satisfies $\Delta r \ge K$, see \cite[Figure 1, p. 156]{Web10}. The
general case of antitrees with arbitrary spherical growth
$|S_r| = f(r)$ where $f$ is any natural number valued function is
considered in \cite[Example 4.11]{Woj11}. There it is shown that
antitrees are stochastically complete if and only if
$$ \sum_r \frac{\sum_{k=0}^r f(k)}{f(r)f(r+1)} = \infty. $$
This is used to give a counterexample to a direct analogue to
Grigor'yan's result for stochastic completeness of manifolds (see
\cite{Gri99}). Indeed, Grigor'yan's result says that any
stochastically incomplete manifold must have superexponential volume
growth while the result above gives stochastically incomplete graphs
which have only polynomial volume growth when the combinatorial graph
metric is used. These examples give the smallest such examples in the
combinatorial graph metric by a result of Huang, Grigor'yan and
Masamune \cite[Theorem 1.4]{GHM12}, where the example (and name) of
antitrees also appears. This might be the first time in print that the
name is used and they refer to them as the ''\emph{antitree} of
Wojciechowski".  A proper definition with the name of antitree first
appears in \cite[Definition 6.3]{KLW13}. Here the result on stochastic
completeness is generalized to all weakly spherically symmetric graphs
of which the antitrees are but an example. Furthermore, it is shown
that the non-normalized Laplacian $\Delta$ on any such stochastically
incomplete antitree has positive bottom of the spectrum, see \cite[Corollary
6.6]{KLW13}. This gives a counterexample to a direct analogue to a theorem of
Brooks \cite{Bro81} which states that the bottom of the spectrum of
the Laplacian on any manifold with subexponential volume growth is
zero. This sparked an interest in applying intrinsic metrics as
defined by Frank, Lenz and Wingert in \cite{FLW14} to study the
question involving volume growth on graphs of unbounded vertex
degree. In particular, the analogue to Grigor'yan's theorem was first
proven in \cite{Fol14} (see also \cite{Hua14} for an analytic proof)
while the analogue to Brooks' theorem was shown in \cite{HKW13}. Since
then, antitrees appear in a variety of places. Their spectral theory
is thoroughly analyzed by Breuer and Keller in \cite{BK13}. Here it
should be noted that the spectrum consists mainly of eigenvalues with
compactly supported eigenfunctions and a further spectral component
which can be singular continuous in certain cases. Antitrees are also
used as a counterexample to a conjecture presented by Golenia and
Schumacher in \cite{GS11} concerning the deficiency indices of the
adjacency matrix, see \cite{GS13}. They are also used to show the
utility of the new bottom of the spectrum estimate for a Cheeger
constant involving intrinsic metrics in \cite{BKW15}.

\section{Definitions and notations}
\label{sec:basics}

\subsection{Basic graph theoretical notations}
\label{subsec:basics}

Let $G=(V,E)$ be a locally finite connected \emph{simple}
combinatorial graph (that is, no loops and no multiple edges) with
vertex set $V$ and edge set $E$. For any $x,y \in V$ we write
$x \sim y$ if $\{x,y\} \in E$. The \emph{degree of a vertex} $x \in V$
is denoted by $d_x$. Let $d: V \times V \to \IN \cup \{0\}$ be the
\emph{combinatorial distance function}, i.e., $d(x,y)$ is the length
of the shortest path from $x$ to $y$. For $x \in V$, the \emph{combinatorial
spheres} and \emph{balls} of radius $r \ge 0$ around $x$ are denoted by
\begin{eqnarray*}
S_r(x) &=& \{ y \in V \mid d(x,y) = r \}, \\
B_r(x) &=& \{ y \in V \mid d(x,y) \le r \},
\end{eqnarray*}
respectively. The \emph{diameter} of $G$ is
defined as
$$ 
{\rm{diam}}(G) = \sup\{d(x,y) \mid x,y \in V\} \in \IN \cup \{0,\infty\}. 
$$

\subsection{Bakry-\'Emery curvature} \label{section:BEcurv}

As mentioned before, this curvature notion is rooted on Bochner's
formula using a Laplacian operator leading to the curvature-dimension
inequality (CD-inequality for short). This approach was pursued by
Bakry-\'Emery \cite{BE85} via an elegant $\Gamma$-calculus and lead to
a substitute of the lower Ricci curvature bound of the underlying
space for much more general settings. (Some further information on the
Bochner approach can be found, e.g., in \cite[Remark 1.3]{CLP2016}).

Recall the definition \eqref{eq:lap} of the normalized
($\mu(x) = d_x$) and non-normalized Laplacian ($\mu \equiv 1$) from
the Introduction. Such a choice of Laplacian leads to the following
operator $\Gamma$ for all $f,g: V \to \IR$:
\begin{eqnarray*}
\Gamma(f,g)(x)&=&\frac{1}{2}(\Delta(fg)-f\Delta g-g\Delta f)(x) \\
&=& \frac{1}{2\mu(x)} \sum_{y \sim x} (f(y)-f(x))(g(y)-g(x)).
\end{eqnarray*}
For simplicity, we always write $\Gamma(f):=\Gamma(f,f)$. Iterating
$\Gamma$, we can define another operator $\Gamma_2$, given by
$$\Gamma_2(f,g)(x)=\frac{1}{2}(\Delta\Gamma(f,g)-\Gamma(f,\Delta
g)-\Gamma(g,\Delta f))(x).$$ 
Again, we abbreviate $\Gamma_2(f)=\Gamma_2(f,f)$. The Bakry-{\'E}mery
curvature is defined via these operators in the following way.

\begin{definition} Let $K\in \IR$ and $N\in (0,\infty]$.
  \begin{enumerate}[(i)]
  \item The pointwise curvature dimension condition $CD(K,N,x)$ for
    $x\in V$ is defined by
    $$\Gamma_2(f)(x)\geq K\Gamma(f)(x)+\frac 1N(\Delta
    f)^2(x),\,\,\,\text{for any }\,\,\,f: V\to \mathbb{R}.$$
  \item The global curvature dimension condition $CD(K,N)$ holds if and only if
    $CD(K,N,x)$ holds for any $x\in V$.
  \item For any $x\in V$, we define
    \begin{equation} \label{eq:KinfCD}
    {\mathcal{K}}_{G,x}(N):=\sup\{K\in\IR\mid CD(K,N,x)\}.
    \end{equation}
  \end{enumerate}
\end{definition}

In this article, we are only concerned with $\infty$-curvature, that
is, $N = \infty$. Following \cite[Prop. 2.1]{CLP2016}, the condition
$CD(K,\infty,x)$ is equivalent to
\begin{equation} \label{eq:BEcurv} 
\Gamma_2(x) \ge K \Gamma(x), 
\end{equation}
where $\Gamma_2(x)$ and $\Gamma(x)$ are symmetric matrices of the
corresponding quadratic forms evaluated at $x \in V$. Since only local
information needs to be taken into account, they are of size
$|B_2(x)| \times |B_2(x)|$ and $|B_1(x)| \times |B_1(x)|$,
respectively, and to make sense of \eqref{eq:BEcurv} the smaller size
matrix must be padded with $0$ entries. For more information in the
non-normalized case, see \cite[Sections 2.1-2.3]{CLP2016}.
The entries of these matrices in the general weighted case
are explicitly given in \cite[Section 12]{CLP2016}. (Note that for the
context of this article, the edge weights $w: E \to [0,\infty)$ take
only values $0,1$ and reflect adjacency of vertices and the vertex
measure $\mu: V \to (0,\infty)$ will only correspond to the normalized
and non-normalized cases.)

The main tool to prove strictly positive curvature is \cite[Corollary
2.7]{CLP2016}, that is, the following properties are equivalent:
\begin{itemize} 
\item $\Gamma_2(x)$ is positive semidefinite with one-dimensional kernel,
\item ${\mathcal{K}}_{G,x}(\infty) > 0$.
\end{itemize}
\cite[Corollary 2.7]{CLP2016} covers only the non-normalized case but
one can easily check that the equivalence holds also in the setting of
general vertex measures.

\subsection{Ollivier-Ricci curvature} \label{section:ORcurv}

As mentioned before, Ollivier-Ricci curvature is based on optimal
transport.  Ollivier-Ricci curvature was introduced in
\cite{Oll2009}. A fundamental concept in optimal transport is the
Wasserstein distance between probability measures.

\begin{definition}
Let $G = (V,E)$ be a locally finite graph. Let $\mu_{1},\mu_{2}$ be two probability measures on $V$. The {\it Wasserstein distance} $W_1(\mu_{1},\mu_{2})$ between $\mu_{1}$ and $\mu_{2}$ is defined as
\begin{equation} \label{eq:W1def}
W_1(\mu_{1},\mu_{2})=\inf_{\pi} \sum_{y\in V}\sum_{x\in V} d(x,y)\pi(x,y),
\end{equation}
where the infimum runs over all transportation plans $\pi:V\times  V\rightarrow [0,1]$ satisfying
$$\mu_{1}(x)=\sum_{y\in V}\pi(x,y),\:\:\:\mu_{2}(y)=\sum_{x\in V}\pi(x,y).$$
\end{definition}

The transportation plan $\pi$ moves a mass
distribution given by $\mu_1$ into a mass distribution given by
$\mu_2$, and $W_1(\mu_1,\mu_2)$ is a measure for the minimal effort
which is required for such a transition.
\\
\\
If $\pi$ attains the infimum in \eqref{eq:W1def} we call it an {\it
  optimal transport plan} transporting $\mu_{1}$ to $\mu_{2}$.
\\
\\
We define the following probability distributions $\mu_x$ for any
$x\in V,\: p\in[0,1]$:
$$\mu_x^p(z)=\begin{cases}p,&\text{if $z = x$,}\\
\frac{1-p}{d_x},&\text{if $z\sim x$,}\\
0,& \mbox{otherwise.}\end{cases}$$

\begin{definition}
The $ p-$Ollivier-Ricci curvature on an edge $x\sim y$ in $G=(V,E)$ is
$$\kappa_{ p}(x,y)=1-W_1(\mu^{ p}_x,\mu^{ p}_y),$$
where $p \in [0,1]$ is called the {\it idleness}.

The Ollivier-Ricci curvature introduced by Lin-Lu-Yau in
\cite{LLY11}, is defined as
$$\kappa_{LLY}(x,y) = \lim_{ p\rightarrow 1}\frac{\kappa_{ p}(x,y)}{1- p}.$$
\end{definition}

A fundamental concept in the optimal transport theory and vital to our work is Kantorovich duality. First we recall the notion
of 1--Lipschitz functions and then state Kantorovich duality.

\begin{definition}
Let $G=(V,E)$ be a locally finite graph, $\phi:V\rightarrow\mathbb{R}.$ We say that $\phi$ is $1$-Lipschitz if 
$$|\phi(x) - \phi(y)| \leq d(x,y)$$
for all $x,y\in V.$ Let \textrm{1--Lip} denote the set of all $1$--Lipschitz functions. 
\end{definition}

Note that, by triangle inequality, $\phi$ is $1$--Lipschitz iff
$|\phi(x)-\phi(y)| \le 1$ for all paris $x \sim y$.

\begin{theorem}[Kantorovich duality]\label{Kantorovich}
Let $G = (V,E)$ be a locally finite graph. Let $\mu_{1},\mu_{2}$ be two probability measures on $V$. Then
$$W_1(\mu_{1},\mu_{2})=\sup_{\substack{\phi:V\rightarrow \mathbb{R}\\ \phi\in \textrm{\rm{1}--{\rm Lip}}}}  \sum_{x\in V}\phi(x)(\mu_{1}(x)-\mu_{2}(x)).$$
\\
\\
If $\phi \in \textrm{1--Lip}$ attains the supremum we call it an \emph{optimal Kantorovich potential} transporting $\mu_{1}$ to $\mu_{2}$.
\end{theorem}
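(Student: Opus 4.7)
The plan is to prove Kantorovich duality by recognising $W_1(\mu_1,\mu_2)$ as the value of a finite-dimensional linear program and passing to its LP dual. Since in our applications the measures $\mu_x^p$ are supported on $B_1(x)$, I would first reduce to the case $\operatorname{supp}(\mu_i) \subseteq A \cup B$ for finite sets $A, B \subseteq V$; both sides of the claimed identity then depend on only finitely many values of $\pi$ and $\phi$.

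In this reduced setting, \eqref{eq:W1def} is a finite LP: minimise $\sum_{x\in A,\, y \in B} d(x,y)\pi(x,y)$ over $\pi \ge 0$ subject to the marginal equalities $\sum_y \pi(x,y) = \mu_1(x)$ and $\sum_x \pi(x,y) = \mu_2(y)$. Introducing multipliers $\phi(x)$ and $-\psi(y)$ for these two families of constraints and minimising out $\pi \ge 0$ from the Lagrangian gives the dual LP
$$
\sup\left\{ \sum_{x} \phi(x)\mu_1(x) - \sum_{y} \psi(y)\mu_2(y) \;:\; \phi(x) - \psi(y) \le d(x,y)\ \forall x \in A,\, y \in B \right\}.
$$
Primal feasibility is witnessed by $\pi = \mu_1 \otimes \mu_2$ and the primal objective is bounded below by $0$, so finite-dimensional LP strong duality (via Farkas' lemma) closes the duality gap.

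The last step is a $c$-transform argument collapsing the dual pair $(\phi,\psi)$ into a single $1$-Lipschitz function. Given a dual-feasible pair, I set
$$ \varphi(z) := \inf_{y \in B}\bigl(\psi(y) + d(z,y)\bigr), \qquad z \in V. $$
The triangle inequality for $d$ makes $\varphi$ a $1$-Lipschitz function on $V$; the dual constraint rewritten as $\phi(x) \le \psi(y)+d(x,y)$ forces $\varphi(x) \ge \phi(x)$ for $x \in A$; and taking $y = z$ in the infimum shows $\varphi(y) \le \psi(y)$ for $y \in B$. These inequalities combine to give
$$ \sum_x \varphi(x)\bigl(\mu_1(x)-\mu_2(x)\bigr) \;\ge\; \sum_x \phi(x)\mu_1(x) - \sum_y \psi(y)\mu_2(y). $$
Conversely every $1$-Lipschitz $\varphi$ is dual-feasible via $\phi = \psi = \varphi$, so the supremum over $1$-Lipschitz functions coincides with the LP dual value and hence with $W_1(\mu_1,\mu_2)$.

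The main obstacle is really only the strong duality step; for a finite LP with equality constraints it reduces to Farkas' lemma or a separating hyperplane argument, but it deserves to be stated explicitly since the primal constraints are equalities rather than inequalities. A minor technical point is that $\varphi$ must take finite values on all of $V$, which is automatic here because the defining infimum is taken over the finite set $B$ and $d$ is a genuine metric on $V$, so no Kirszbraun-style extension argument is required.
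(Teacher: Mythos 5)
The paper does not prove this statement at all: it is quoted as classical Kantorovich duality (with a pointer to the optimal-transport literature) and used as a black box, so there is no in-paper argument to compare against. Your LP-duality proof is the standard one and is essentially correct: primal feasibility via the product plan $\mu_1\otimes\mu_2$, boundedness below by $0$, finite-dimensional strong duality, and then the $c$-transform $\varphi(z)=\inf_{y}(\psi(y)+d(z,y))$ to collapse the dual pair into a single $1$-Lipschitz potential, with the reverse inclusion given by $\phi=\psi=\varphi$. The signs and inequalities in the collapse step all check out, including at points lying in both supports, and your observation that $\varphi$ is automatically defined and finite on all of $V$ (so no extension lemma is needed) is correct. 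The one caveat is that your opening ``reduction'' is really a restriction: the theorem as stated allows arbitrary probability measures on the countable vertex set $V$, and for measures with infinite support the finite LP argument does not apply as written --- one would need an exhaustion/approximation argument (or the general duality theorem for Polish spaces) to close that case, and one must also worry about $W_1$ being infinite when the graph has infinite diameter. Since every measure actually used in the paper ($\mu_x^p$ and its finite-graph surrogates) is finitely supported, your proof covers everything the paper needs, but you should either state the theorem for finitely supported measures or flag the missing limiting argument explicitly.
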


The following result on some properties of $p \mapsto \kappa_p(x,y)$
for $x \sim y$ and its consequences was useful in our curvature
considerations.

\begin{theorem}[see \cite{BCLMP}]
  Let $G=(V,E)$ be a locally finite graph. Let $x,y\in V$ with
  $x\sim y.$ Then the function $p \mapsto \kappa_{p}(x,y)$ is concave
  and piecewise linear over $[0,1]$ with at most $3$ linear
  parts. Furthermore $\kappa_{p}(x,y)$ is linear on the intervals
  \begin{equation*}
    \left[0,\frac{1}{{\rm{lcm}}(d_{x},d_{y})+1}\right]\:\:\: {\rm and} \:\:\:\left[\frac{1}{\max(d_{x},d_{y})+1},1\right].
  \end{equation*}
  Thus, if we have the further condition $d_{x}=d_{y}$, then
  $\kappa_{p}(x,y)$ has at most two linear parts.
\end{theorem}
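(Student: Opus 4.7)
The plan is to prove everything via Kantorovich duality (Theorem \ref{Kantorovich}), which expresses
\[
W_1(\mu_x^p,\mu_y^p) \;=\; \sup_{\phi} L_p(\phi), \qquad L_p(\phi) \;=\; p\,A(\phi) + (1-p)\,B(\phi),
\]
where the supremum is over 1-Lipschitz $\phi: V \to \IR$ and
\[
A(\phi) \;=\; \phi(x)-\phi(y), \qquad B(\phi) \;=\; \frac{1}{d_x}\sum_{z\sim x}\phi(z) \;-\; \frac{1}{d_y}\sum_{z\sim y}\phi(z).
\]
Only the values of $\phi$ on the finite set $S = \{x,y\}\cup N(x)\cup N(y)$ affect $L_p(\phi)$, and after normalising $\phi(x)=0$ the 1-Lipschitz constraint cuts out a bounded polytope of candidate potentials. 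Hence $W_1(p)$ is a supremum of finitely many affine-in-$p$ functions, so it is convex and piecewise linear on $[0,1]$, and $\kappa_p = 1 - W_1$ is concave and piecewise linear.

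For the linearity on $[1/(\max(d_x,d_y)+1),1]$ I would exhibit an optimal potential $\phi_1$ that achieves $A(\phi_1)=1$ and, subject to that, maximises $B$. A direct primal-dual check --- matching $\phi_1$ against an explicit transport plan that moves the Dirac mass $p$ from $x$ to $y$ and, separately, rebalances the uniform neighbour masses against each other --- shows that $\phi_1$ stays optimal precisely while $p \ge 1/(\max(d_x,d_y)+1)$, the threshold arising from the moment the plan first needs to re-route neighbour mass through the centre (so that the neighbour of largest degree first overflows its $(1-p)/\max(d_x,d_y)$ quota). For the linearity near $p=0$, I would pick any Kantorovich optimiser $\phi_0$ at $p=0$ and track how much Dirac mass can be added at $x$ and $y$ while still admitting a non-negative rebalancing of the original transport plan; this rebalancing requires simultaneously decrementing neighbour masses in units of $1/d_x$ and $1/d_y$, forcing the threshold $1/(\mathrm{lcm}(d_x,d_y)+1)$.

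Given the two outer linear segments, concavity restricts the remaining middle interval $[1/(\mathrm{lcm}(d_x,d_y)+1),\,1/(\max(d_x,d_y)+1)]$ to carry at most one further linear piece, so $\kappa_p$ has at most three parts in total; when $d_x=d_y$ this middle interval degenerates to the single point $p=1/(d_x+1)$ and only two pieces remain. The main obstacle is precisely to rule out several linear pieces in the middle: concavity alone only forces the slope to decrease monotonically, so one must argue from the extreme-point structure of the 1-Lipschitz polytope on $S$ that in the middle regime a single extreme potential is active throughout, essentially via a case analysis of how the optimiser interpolates between $\phi_0$ and $\phi_1$. Verifying that the candidate dual potentials $\phi_0$ and $\phi_1$ are indeed globally optimal up to the stated thresholds --- not merely locally so --- is the delicate step that pins down the denominators $\mathrm{lcm}(d_x,d_y)+1$ and $\max(d_x,d_y)+1$ by careful bookkeeping of mass flow.
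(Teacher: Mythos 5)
This theorem is not proved in the paper at all --- it is quoted from \cite{BCLMP} --- so the only meaningful comparison is with the argument in that reference. Your framework (Kantorovich duality, $L_p(\phi)=pA(\phi)+(1-p)B(\phi)$ affine in $p$ for each fixed $\phi$, hence $W_1$ convex and $\kappa_p$ concave and piecewise linear) is sound and is indeed how the cited proof begins. The genuine gap --- which you yourself flag --- is the bound of \emph{three} linear pieces. The assertion that ``concavity restricts the remaining middle interval to carry at most one further linear piece'' is false: a concave piecewise linear function that is linear on two outer intervals can still have arbitrarily many breakpoints in between, and the extreme points of the $1$-Lipschitz polytope on $S$ need not be integer-valued and can produce many distinct slopes $A-B$, so ``a single extreme potential is active throughout the middle'' is exactly what must be proved and your sketch does not supply it. The missing idea is the reduction to \emph{integer-valued} optimal Kantorovich potentials: since $d$ is integer-valued, for every $p$ the supremum is attained by some integer-valued $\phi$; normalising $\phi(y)=0$ gives $A(\phi)=\phi(x)\in\{-1,0,1\}$. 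Grouping the finitely many candidates by $a=A(\phi)$ and noting that, for fixed $a$, maximising $pa+(1-p)B(\phi)$ amounts to maximising $B(\phi)$ independently of $p$, one obtains $W_1(p)=\max_{a\in\{-1,0,1\}}\bigl(pa+(1-p)B_a^*\bigr)$, a maximum of only three affine functions --- at most three pieces, with no separate ``middle regime'' to analyse.

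The two interval claims are also only heuristically justified in your proposal, and the clean mechanism is worth stating: if one $1$-Lipschitz $\phi$ is optimal at \emph{both} endpoints of an interval, then $L_p(\phi)$ is the chord of the convex function $W_1$ there, and $W_1\ge L_p(\phi)$ together with convexity forces equality, hence linearity, on the whole interval. So the content of the statement on $\left[\frac{1}{\max(d_x,d_y)+1},1\right]$ is that at $p=\frac{1}{\max(d_x,d_y)+1}$ there exists an optimal potential with $\phi(x)-\phi(y)=1$ (any such $\phi$ is automatically optimal at $p=1$), and the content on $\left[0,\frac{1}{\mathrm{lcm}(d_x,d_y)+1}\right]$ is that some potential is simultaneously optimal at $p=0$ and at $p=\frac{1}{\mathrm{lcm}(d_x,d_y)+1}$. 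Both require the explicit mass-transport constructions of \cite{BCLMP}; the ``quota overflow'' and ``units of $1/d_x$ and $1/d_y$'' pictures you give do not by themselves pin down these thresholds. Your final observation --- that $d_x=d_y$ makes the two intervals meet at $\frac{1}{d_x+1}$, leaving at most two pieces --- is correct once the rest is in place.
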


\section{Bakry-\'Emery curvature of antitrees}
\label{sec:becalcs}

Let us first introduce some notation and a useful general fact (Lemma
\ref{lem:usefact} below). The identity matrix of size $d$ is denoted
by ${\rm Id}_d$ and the all-zero and all-one matrix of size
$d_1 \times d_2$ is denoted by $0_{d_1,d_2}$ and $J_{d_1,d_2}$,
respectively. Moreover, if $d_1 = d_2$, we use the notation
$J_{d_1} = J_{d_1,d_1}$, and if $d_2 = 1$, we use the notation
${\bf 1}_{d_1}$ for the all-one column vector of size $d_1$. Moreover,
the standard base of column vectors in $\IR^N$ is denoted by
$e_1,\dots,e_N$.

\begin{lemma} \label{lem:usefact} Let $d_1,\dots,d_r \in \IN$ and
  $A = (A_{ij})_{1 \le i,j \le r}$ be a symmetric matrix, where the
  $A_{ij}$ are block matrices of size $d_i \times d_j$ with
  $A_{ji} = A_{ij}^\top$. Assume there exist constants
  $\alpha_i, \beta_i \in \IR$ and $\gamma_{ij} = \gamma_{ji} \in \IR$
  such that, for $1 \le i,j \le r$, $j \neq i$,
  $$ A_{ii} = \alpha_i {\rm Id}_{d_i} + \beta_i J_{d_i} $$
  and
  $$ A_{ij} = \gamma_{ij} J_{d_i,d_j}. $$ 
  Let $A_{\rm{red}} = (a_{ij})_{1 \le i,j \le r}$ be the $r \times r$-matrix
  given by $a_{ij} = {\bf 1}_{d_i}^\top A_{ij} {\bf 1}_{d_j}$, i.e.,
  for $i \neq j$,
  \begin{eqnarray*}
  a_{ii} &=& \alpha_i d_i + \beta_i d_i^2, \\
  a_{ij} &=& \gamma_{ij} d_i d_j.
  \end{eqnarray*}  
  For any vector $w = (w_1,\dots,w_r )^\top \in \IR^r$ let
  $$ \widehat w := (w_1 {\bf 1}_{d_1}^\top, \dots, w_r {\bf 1}_{d_r}^\top )^\top
  \in \IR^{d} $$
  with $d = \sum_{j=1}^r d_j$. Then we have the following two facts:
  \begin{itemize}
  \item[(a)] For every $d_i \ge 2$, the $(d_i- 1)$-dimensional space
  $$ E_i = \left\{ \sum_{j=1}^{d_i} c_j e_{j+d} \mid \sum_{j=1}^{d_i}
    c_j = 0 \right\} $$ 
  with $d = \sum_{j=1}^{i-1} d_j$ consists of eigenvectors to the
  eigenvalue $\alpha_i$.
  \item[(b)] For any $w \in \IR^r$, the corresponding vector $\widehat w$
  is orthogonal to all spaces $E_i$ in (a) and we have
  $$ {\widehat w}^\top A \widehat\, w = w^\top A_{\rm{red}}\, w. $$
  \end{itemize}
\end{lemma}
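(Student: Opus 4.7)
The lemma is pure block-matrix bookkeeping and no substantive difficulty is expected. The plan is to verify parts (a) and (b) by direct computation, using only the single fact that an all-ones matrix acts on a vector as multiplication by its coordinate sum: $J_{d,d'}\, v = \bigl(\sum_k v_k\bigr) {\bf 1}_d$ for every $v \in \IR^{d'}$.

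For part (a) I would take an arbitrary $v = \sum_{j=1}^{d_i} c_j e_{j+d} \in E_i$ with $\sum_j c_j = 0$ (here $d = \sum_{\ell<i} d_\ell$ is the offset of the $i$-th block), so that $v$ is supported in block $i$ with block entries $v|_i = (c_1,\dots,c_{d_i})^\top$. Applying $A$ block by block, the $j$-th block of $Av$ for $j \neq i$ equals $A_{ji}\, v|_i = \gamma_{ji} J_{d_j,d_i}\, v|_i = 0$, since the coordinate sum of $v|_i$ vanishes; and in the $i$-th block $A_{ii}\, v|_i = \alpha_i\, v|_i + \beta_i J_{d_i}\, v|_i = \alpha_i\, v|_i$ for the same reason. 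Hence $Av = \alpha_i v$, so every element of $E_i$ is an $\alpha_i$-eigenvector.

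For part (b), orthogonality of $\widehat w$ with each $E_i$ is immediate: the entries of $\widehat w$ on block $i$ are all equal to $w_i$, so $\widehat w \cdot v = w_i \sum_j c_j = 0$ for any $v = \sum_j c_j e_{j+d} \in E_i$. The quadratic-form identity then reads
$$\widehat w^\top A\, \widehat w = \sum_{i,j=1}^r w_i w_j \bigl({\bf 1}_{d_i}^\top A_{ij}\, {\bf 1}_{d_j}\bigr) = \sum_{i,j=1}^r w_i w_j\, a_{ij} = w^\top A_{\rm{red}}\, w,$$
and the only arithmetic to check is that ${\bf 1}_{d_i}^\top A_{ii}\, {\bf 1}_{d_i} = \alpha_i d_i + \beta_i d_i^2$ (from ${\bf 1}^\top {\rm Id}_{d_i}\, {\bf 1} = d_i$ and ${\bf 1}^\top J_{d_i}\, {\bf 1} = d_i^2$) and ${\bf 1}_{d_i}^\top A_{ij}\, {\bf 1}_{d_j} = \gamma_{ij} d_i d_j$ for $i \neq j$, matching the formulas in the statement of $A_{\rm{red}}$.

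There is no genuine obstacle in the argument; the substance of the lemma lies rather in the resulting orthogonal decomposition of $\IR^d$ (with $d = \sum_j d_j$) into $\bigoplus_i E_i$ and $\{\widehat w \mid w \in \IR^r\}$. On the first summand the restriction of $A$ is diagonal with eigenvalues $\alpha_i$, and on the second $A$ is represented by the much smaller $r \times r$ matrix $A_{\rm{red}}$. This is presumably how the lemma will be applied to $\Gamma_2(x)$ and $\Gamma(x)$ at a vertex $x$ of the antitree: the stabiliser of $x$ in $\mathrm{Aut}(G)$ groups the nearby vertices into a small number of orbits (the various radial-forward, spherical and radial-backward classes), producing exactly the block structure assumed here and reducing the verification of \eqref{eq:BEcurv} to a low-dimensional linear-algebra computation accessible to Maple.
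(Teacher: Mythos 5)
Your proof is correct, and since the paper itself omits the argument with the remark that it is ``a straightforward calculation left to the reader,'' your direct block-by-block verification is exactly the intended route. Both parts check out: the vanishing of $J_{d_j,d_i}v|_i$ for zero-sum $v|_i$ gives (a), and bilinearity plus the constancy of $\widehat w$ on each block gives (b).
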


The proof of this lemma is a straightforward calculation and left to
the reader.

Now we start with our Bakry-{\'E}mery curvature considerations for
antitrees. Due to localness of the Bakry-{\'E}mery curvature notion,
we only need to consider ${\mathcal K}_{G,x}(\infty)$ for
\begin{itemize}
\item[(i)] a vertex $x \in V_3$ in the finite antitree
  $\AT((a,b,c,d,e))$, 
\item[(ii)] a vertex $x \in V_2$ in the finite antitree $\AT((b,c,d,e))$, and
\item[(iii)] a vertex $x \in V_1$ in the finite antitree $\AT((c,d,e))$.
\end{itemize}
The relevant results are given in the following theorems.

\begin{theorem} \label{thm:atabcde-BE}
  Let $x \in V_3$ be a vertex of the finite antitree
  $G = \AT((a,b,c,d,e))$. If
  $$ a = n, \,\, b = n+1, \,\, c = n+2, \,\, d = n+3, \,\, \text{and} \,\, 
  e = n+4, $$
  we have in both the normalized and non-normalized case:
  \begin{equation} \label{eq:poscur}
  {\mathcal K}_{G,x}(\infty) > 0.
  \end{equation}
\end{theorem}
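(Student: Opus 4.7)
The plan is to apply the equivalence recalled just before the theorem from \cite[Corollary 2.7]{CLP2016}: $\mathcal{K}_{G,x}(\infty) > 0$ holds if and only if the symmetric matrix $\Gamma_2(x)$, viewed as a matrix of size $|B_2(x)| \times |B_2(x)|$, is positive semidefinite with one-dimensional kernel. Since $x \in V_3$ sits in the middle generation of $\AT((a,b,c,d,e))$, we have $B_2(x) = V_1 \cup V_2 \cup V_3 \cup V_4 \cup V_5 = V$, so $\Gamma_2(x)$ has size $(a+b+c+d+e) \times (a+b+c+d+e)$. Note that spherical edges inside $V_1$ and $V_5$ can be ignored from the outset since edges between vertices of the $2$-sphere around $x$ do not contribute to $\Gamma_2(x)$.

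The first substantive step is to exploit the large symmetry group of the antitree. The stabilizer of $x$ in $\operatorname{Aut}(G)$ acts transitively on each of the six orbits
\[
\{x\}, \quad V_3 \setminus \{x\}, \quad V_2, \quad V_4, \quad V_1, \quad V_5,
\]
of sizes $1, c-1, b, d, a, e$. Since $\Gamma_2(x)$ is built symmetrically from the edge data and commutes with this action, its block decomposition indexed by the six orbits has diagonal blocks of the form $\alpha_i \,\mathrm{Id}_{d_i} + \beta_i J_{d_i}$ and off-diagonal blocks of the form $\gamma_{ij} J_{d_i,d_j}$. This is precisely the hypothesis of Lemma \ref{lem:usefact}. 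The constants $\alpha_i, \beta_i, \gamma_{ij}$ can be read off directly from the explicit weighted formulas in \cite[Section 12]{CLP2016}, with edge weights $w \equiv 1$ and vertex measure $\mu \equiv 1$ (non-normalized) or $\mu(v) = d_v$ (normalized).

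By Lemma \ref{lem:usefact}, the spectrum of $\Gamma_2(x)$ decomposes into (i) the eigenvalues $\alpha_i$ with multiplicity $d_i - 1$ arising from the subspaces $E_i$ of the orbits of size $\geq 2$, together with (ii) the six eigenvalues of the reduced $6 \times 6$ matrix $A_{\mathrm{red}}$ from Lemma \ref{lem:usefact}(b). The constant function on $V$ lies in the kernel of $\Gamma_2(x)$ and corresponds to the all-ones vector in $\mathbb{R}^6$, giving one guaranteed eigenvalue $0$ of $A_{\mathrm{red}}$. Hence \eqref{eq:poscur} reduces to verifying the two positivity statements: $\alpha_i > 0$ for each of the five orbits of size $\geq 2$, and the five remaining eigenvalues of $A_{\mathrm{red}}$ are strictly positive.

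The main obstacle is the concrete substitution $a,b,c,d,e = n, n+1, n+2, n+3, n+4$: the entries of $A_{\mathrm{red}}$ are then rational functions of $n$, and positive definiteness modulo the known one-dimensional kernel must be established uniformly in $n \in \IN$. The $\alpha_i$ are simple rational expressions in $n$ whose positivity is immediate. For $A_{\mathrm{red}}$ the cleanest approach is to pass to an orthonormal basis of the hyperplane $\mathbf{1}^\perp \subset \mathbb{R}^6$, obtaining a $5 \times 5$ symmetric matrix $\widetilde{A}(n)$ with polynomial entries, and then apply Sylvester's criterion: each leading principal minor of $\widetilde{A}(n)$ must be a polynomial in $n$ that is strictly positive for all $n \ge 1$. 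Carrying out this symbolic verification for both the normalized and the non-normalized cases is precisely the role of the Maple code recorded in Appendix~\ref{app:thm:atabcde-BE}.
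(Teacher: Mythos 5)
Your proposal is correct and follows essentially the same route as the paper: reduce via the orbit/block structure and Lemma \ref{lem:usefact} to checking that the diagonal-block eigenvalues $\alpha_i$ are positive and that $A_{\mathrm{red}}$ is positive definite on a complement of $\mathbb{R}\mathbf{1}_6$, with the latter verified symbolically in $n$ by Maple. The only (inessential) difference is the final positivity certificate: you restrict to a basis of a complement of $\mathbf{1}_6$ and invoke Sylvester's criterion, whereas the paper keeps the $6\times 6$ matrix and checks that the coefficients of its characteristic polynomial $\chi_n(t)=t^6-p_5(n)t^5+\dots-p_1(n)t$ alternate in sign with $p_i(n)>0$, which rules out negative eigenvalues and pins the kernel to dimension one.
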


\begin{proof}
  In this proof, we will keep the values $a,b,c,d,e$ general as long
  as possible and only specify them towards the end of the proof. Let
  $G = \AT((a,b,c,d,e))$, $1 \le a \le b < c \le d \le e$ and
  $x \in V_3$. To cover simultaneously both the normalized and
  non-normalized setting, we choose
  $$ \epsilon_- = \frac{\mu(x)}{\mu(y_-)} - 1, \quad \epsilon_+ =
  \frac{\mu(x)}{\mu(y_+)} - 1, $$ 
  where $y_- \in V_2$ and $y_+ \in V_4$. (Note that $\mu(z)$ depends
  only the generation of $z$.) Using the results in \cite[Section
  12]{CLP2016}, a tedious but straightforward calculation shows the
  following: The matrix $A = 4 \mu(x)^2 \Gamma_2(x)$ is of the
  following block structure $A = (A_{ij})_{1 \le i,j \le 6}$ where the
  blocks correspond to an ordering of $B_2(x)$ into the vertex sets
  $\{x\}, V_3 \backslash \{x\}, V_4, V_2, V_5, V_1$:
  \begin{eqnarray*}
    A_{11} &=& d_x(d_x+3) + 3 b \epsilon_- + 3 d \epsilon_+, \\
    A_{12} &=& (-(d_x+3) + b \epsilon_- + d \epsilon_+)J_{1,c-1}, \\
    A_{13} &=& (-(d_x+3+e)-(2+c+e)\epsilon_+)J_{1,d}, \\
    A_{14} &=& (-(d_x+3+a)-(2+a+c)\epsilon_-)J_{1,b}, \\
    A_{15} &=& (d+d \epsilon_+)J_{1,e}, \\
    A_{16} &=& (b+b \epsilon_-)J_{1,a}, \\
    A_{22} &=& (3(d_x+1)+b\epsilon_-+d\epsilon_+)\Id_{c-1}-2J_{c-1}, \\
    A_{23} &=& -(2+2\epsilon_+)J_{c-1,d}, \\
    A_{24} &=& -(2+2\epsilon_-)J_{c-1,b}, \\
    A_{25} &=& 0_{c-1,e}, \\
    A_{26} &=& 0_{c-1,a}, \\
    A_{33} &=& (-b+3c+3d+3e+(3c+4d+3e)\epsilon_+)\Id_d-(2+4\epsilon_+)J_d, \\
    A_{34} &=& 2J_{d,b}, \\
    A_{35} &=& -(2+2\epsilon_+)J_{d,e}, \\
    A_{36} &=& 0_{d,a}, \\
    A_{44} &=& (3a+3b+3c-d+(3a+4b+3c)\epsilon_-)\Id_b - (2+4\epsilon_-)J_b, \\
    A_{45} &=& 0_{b,e}, \\
    A_{46} &=& -(2+2\epsilon_-)J_{b,a}, \\
    A_{55} &=& (d+d\epsilon_+)\Id_e, \\
    A_{56} &=& 0_{e,a}, \\
    A_{66} &=& (b+b\epsilon_-)\Id_a.
  \end{eqnarray*}
  Let $A_{\rm{red}}$ be the corresponding reduced symmetric
  $6 \times 6$ matrix $A_{\rm{red}} =(a_{ij})_{1 \le i,j \le 6}$, as
  defined in Lemma \ref{lem:usefact}. 

  Recalling the equivalence at the end of Section
  \ref{section:BEcurv}, ${\mathcal{K}}_{G,x}(\infty) > 0$ is
  equivalent to $A$ being positive semidefinite and having
  one-dimensional kernel. Lemma \ref{lem:usefact} provides the
  following eigenvalues and multiplicites of $A$: 
  \begin{itemize}
  \item Since $\epsilon_-, \epsilon_+ > -1$ and $d_x = b+c+d-1$, 
  $$ \alpha_2 = 3(d_x+1_+b\epsilon_-+d\epsilon_+) > 0 $$
  is a positive eigenvalue of multiplicity $c-2 \ge 0$.
  \item Note that in both normalized and non-normalized case we
  have $\epsilon_+ \ge \frac{b+c+d-1}{c+d+e-1} - 1$ and
  \begin{multline*} 
  \alpha_3 = -b+3c+3d+3e+(3c+4d+3e)\epsilon_+ \ge \\ \ge 
  -b-d + \frac{3c+4d+3e}{c+d+e-1} (b+c+d-1) > 0
  \end{multline*}
  is a positive eigenvalue of multiplicity $d-1 \ge 1$.
  \item Note that in both normalized and non-normalized case we have
  $\epsilon_- \ge 0$ and
  $$ \alpha_4 = 3a+3b+3c-d+(3a+4b+3c)\epsilon_- \ge 3a+3b+3c-d > 0$$
  if $d < 3(a+b+c)$. This eigenvalue has multiplicity $b-1 \ge 0$.
  \item Since $\epsilon_-,\epsilon_+ > -1$, 
  $$ \alpha_5 = d+d\epsilon_+ > 0 \quad \text{and} \,\, 
  \alpha_6 = b+b\epsilon_- > 0$$ are both positive eigenvalues of
  multiplicities $e-1 \ge 1$ and $a-1 \ge 0$, respectively.
  \end{itemize}
  Moreover, it is easily checked that $A {\bf 1}_{a+b+c+d+e} = 0$. The
  orthogonal complement of the direct sum of the corresponding
  eigenspaces $E_i$ and $\IR {\bf 1}_{a+b+c+d+e}$ is
  $5$-dimensional and given by $\widehat W = \{ \widehat w \mid w \in W
  \}$, where $(d_1,d_2,d_3,d_4,d_5,d_6) = (1,c-1,d,b,e,a)$ and 
  $$ W := \{ w \in \IR^6, \sum_{i=1}^6 w_i d_i = 0 \}. $$
  Under the assumption $d < 3(a+b+c)$,
  ${\mathcal{K}}_{G,x}(\infty) > 0$ is then equivalent to
  $A \vert_{\widehat W}$ being positive definite, which is equivalent
  to
  \begin{equation} \label{eq:poscurv} 
  {\widehat w}^\top A\, \widehat w = w^\top A_{\rm{red}}\, w > 0 \quad
  \text{for all $w \in W \backslash \{0\}$.}
  \end{equation}
  Now we choose $(a,b,c,d,e) = (n,n+1,n+2,n+3,n+4)$, $n \in \IN$. Then
  we have $d < 3(a+b+c)$ and we consider the characteristic polynomial
  of $A_{\rm{red}}$, which is of the form
  $$ 
  \chi_n(t) = \det(t \Id_6 - A_{\rm{red}}) = 
  t^6 - p_5(n) t^5 + p_4(n) t^4 - p_3(n) t^3 + p_2(n) t^2
  - p_1(n) t,
  $$
  where $p_i(n)$ are polynomials in the variable $n$. (We do not have
  a constant term since $\IR \cdot {\bf 1}_6$ lies in the kernel of
  $A_{\rm{red}}$.) A Maple calculation shows that all the $p_i(n)$ are
  strictly positive for any value of $n \in \IN$ (see Appendix
  \ref{app:thm:atabcde-BE} for more details). This shows that we have
  $\chi_n(t) > 0$ for all $t < 0$, so $A_{\rm{red}}$ is positive
  semidefinite. Since $p_1(n) > 0$, $A_{\rm{red}}$ has a
  one-dimensional kernel $\IR \cdot {\bf 1}_6$.

  Now we can show \eqref{eq:poscurv}: Let
  $w_0 = {\bf 1}_6, w_1, \dots, w_5 \in \IR^6$ be a basis of
  eigenvectors of $A_{\rm{red}}$, i.e.,
  $A_{\rm{red}} w_j = \lambda_j w_j$ with $\lambda_j > 0$ for
  $j \in \{1,\dots,5\}$. Any vector $w \in W \backslash \{0\}$ is of
  the form $w = \sum_{j=0}^5 c_j w_j$ with some $c_{j_0} \neq 0$,
  $j_0 \in \{1,\dots,5\}$, since $w_0 \not\in W$. This implies
  $$ w^\top A_{\rm{red}}\, w = \sum_{j=1}^5 \lambda_j c_j^2 \ge
  \lambda_{j_0} c_{j_0}^2 > 0. $$
\end{proof}

\begin{theorem} \label{thm:atbcde-BE} 
  Let $x \in V_2$ be a vertex of the finite antitree
  $G = \AT((b,c,d,e))$. If $(c,d,e) = (1,2,3)$, we have in both the normalized
  and non-normalized case:
  $$  
  {\mathcal K}_{G,x}(\infty) > 0.
  $$
\end{theorem}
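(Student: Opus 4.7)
The plan is to follow the strategy of Theorem \ref{thm:atabcde-BE} with the obvious modifications for this shallower situation. Since $x \in V_2$, the $2$-ball decomposes as $B_2(x) = \{x\} \cup (V_2 \setminus \{x\}) \cup V_3 \cup V_1 \cup V_4$, and with the particular choice $(c,d,e)=(1,2,3)$ the block $V_2 \setminus \{x\}$ vanishes (its size is $c-1=0$). Consequently the matrix $A := 4\mu(x)^2\, \Gamma_2(x)$ carries a natural $4 \times 4$ block structure with blocks of sizes $1, 2, b, 3$ indexed by $\{x\}, V_3, V_1, V_4$. The block entries I would compute directly from \cite[Section 12]{CLP2016}; as in the previous proof, to treat the normalized and non-normalized cases uniformly I would introduce parameters $\epsilon_\pm = \mu(x)/\mu(y_\pm) - 1$ with $y_- \in V_1$ and $y_+ \in V_3$.

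Next I would invoke Lemma \ref{lem:usefact} to produce the orthogonal decomposition of $A$: on each subspace $E_i$ (corresponding to a block of size $\ge 2$) the operator acts as multiplication by the diagonal scalar $\alpha_i$ of that block, while on the $4$-dimensional orthogonal complement $\widehat{W} \oplus \IR\mathbf{1}$ it is represented by the reduced matrix $A_{\rm{red}}$. Since $A \mathbf{1} = 0$, the vector $\mathbf{1}$ automatically lies in $\ker A$, and by the equivalence recorded at the end of Section \ref{section:BEcurv}, showing $\mathcal{K}_{G,x}(\infty) > 0$ reduces to verifying (a) that every $\alpha_i > 0$ and (b) that $A_{\rm{red}}$ is positive semidefinite with kernel exactly $\IR\mathbf{1}_4$.

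Step (a) should be a short direct check: the relevant $\alpha_i$ are explicit linear combinations of the generation sizes and of $\epsilon_\pm$, and since $\epsilon_\pm > -1$ and $b \ge 1$, strict positivity can be read off in both normalizations. Step (b) is the substantive part. Because $\mathbf{1}_4 \in \ker A_{\rm{red}}$, the characteristic polynomial factors as $\chi(t) = t^4 - q_3(b)\,t^3 + q_2(b)\,t^2 - q_1(b)\,t$, and once we establish that each $q_i(b)$ is strictly positive for every $b \in \IN$, Descartes' rule of signs forces the three nonzero eigenvalues of $A_{\rm{red}}$ to be positive; strict positivity of $q_1(b)$ in particular ensures the kernel has dimension one. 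I expect the main obstacle to be precisely this symbolic positivity of $q_1,q_2,q_3$ as polynomials in the free parameter $b$ (and implicitly in $\epsilon_\pm$) in both normalizations; following the template of the preceding theorem I would dispatch it by a Maple calculation recorded in Appendix \ref{app:thm:atbcde-BE}. The specific values $(c,d,e) = (1,2,3)$ enter at this point as the minimal configuration for which these coefficients remain uniformly positive for all $b \in \IN$.
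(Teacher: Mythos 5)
Your outline reproduces the paper's method exactly: assemble $A = 4\mu(x)^2\Gamma_2(x)$ in block form from \cite[Section 12]{CLP2016}, peel off the eigenvalues $\alpha_i$ on the spaces $E_i$ via Lemma \ref{lem:usefact}, note $A\mathbf{1}=0$, and reduce the question to the sign pattern of the characteristic polynomial of $A_{\rm red}$, verified in Maple. The divergence is in \emph{which graph} you analyze. You take the hypothesis ``$(c,d,e)=(1,2,3)$'' at face value, so $|V_2|=c=1$, the block $V_2\setminus\{x\}$ is empty, $A_{\rm red}$ is $4\times 4$, and $b$ survives as a free parameter whose symbolic positivity you must then control. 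The paper's proof instead sets $(b,c,d,e)=(1,2,3,4)$ --- the statement's ``$(c,d,e)=(1,2,3)$'' is evidently a misprint carried over from Theorem \ref{thm:atcde-BE} --- because the local model for a $V_2$-vertex of the infinite antitree $\AT((k))$ has generations of sizes $1,2,3,4$ with $|V_2|=2$. There the block $V_2\setminus\{x\}$ has one element, $A$ is $10\times 10$ in five blocks, $A_{\rm red}$ is $5\times 5$, and its characteristic polynomial is purely numerical (e.g.\ $t^5-\tfrac{471}{4}t^4+\cdots$ in the normalized case), so the final step is merely reading off alternating signs of explicit rationals rather than proving positivity of polynomials in a free parameter.

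This is more than bookkeeping: the graph your computation targets, $\AT((b,1,2,3))$ with $x$ the unique $V_2$-vertex, is not the $2$-ball of any vertex of $\AT((k))$, so even a completed version of your argument would not supply the input the paper needs for its main theorem; and the uniform-in-$b$ positivity you defer to Maple is an extra claim the paper never has to (and does not) establish. If you rerun your outline with $(b,c,d,e)=(1,2,3,4)$, keeping the nontrivial $V_2\setminus\{x\}$ block and the resulting $5\times 5$ reduced matrix, it coincides with the paper's proof.
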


\begin{proof}
  We consider again the matrix $A = 4 \mu(x)^2 \Gamma_2(x)$ and choose
  right from the beginning $(b,c,d,e) = (1,2,3,4)$. It can be checked
  that this time the matrix $A$ is of the form
  $A = (A_{ij})_{1 \le i,j \le 5}$ with $A_{ij}$ as in the previous
  proof and $a=0$. As in the previous proof, we conclude that $A$ has
  eigenvalues $\alpha_3 = 27+30\epsilon_+ > 0$ of multiplicity $2$ and
  $\alpha_5 = 1+\epsilon_+ > 0$ of multiplicity $3$ and that
  $A {\bf 1}_{10}=0$. In this case, $A_{\rm{red}}$ is a symmetric
  $5 \times 5$ matrix and its characteristic polynomial of
  $A_{\rm{red}}$ is (see Maple calculations in Appendix
  \ref{app:thm:atbcde-BE})
  $$ \chi(t) = \det(t\Id_5 - A_{\rm{red}}) = t^5 - \frac{471}{4} t^4 + \frac{118743}{32} t^3 - \frac{593811}{16} t^2 + \frac{3082725}{64} t $$ 
  in the normalized case and
  $$ \chi(t) = t^5 - 132 t^4 + 3684 t^3 - 25632 t^2 + 8640 t $$
  in the non-normalized case. The same arguments as in the previous
  proof show that $A$ is positive semidefinite with one-dimensional
  kernel, that is, ${\mathcal K}_{G,x}(\infty) > 0$.
\end{proof}

\begin{theorem} \label{thm:atcde-BE}
  Let $x \in V_1$ be a vertex of the finite antitree $G =
  \AT(c,d,e)$. If $(c,d,e) = (1,2,3)$, we have in both the normalized
  and non-normalized case:
  $$  
  {\mathcal K}_{G,x}(\infty) > 0.
  $$
\end{theorem}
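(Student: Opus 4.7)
The plan is to follow exactly the template used in the proofs of Theorems \ref{thm:atabcde-BE} and \ref{thm:atbcde-BE}. I consider the symmetric matrix $A = 4\mu(x)^2 \Gamma_2(x)$; by the equivalence recalled at the end of Section \ref{section:BEcurv}, $\KK_{G,x}(\infty) > 0$ is equivalent to $A$ being positive semidefinite with one-dimensional kernel. For $x$ the (unique) root of $G = \AT((1,2,3))$ we have $B_2(x) = \{x\} \cup V_2 \cup V_3$ of total size $6$, so $A$ is a $6 \times 6$ matrix. Ordering the vertices according to the blocks $\{x\}, V_2, V_3$ of sizes $(d_1,d_2,d_3)=(1,2,3)$ and writing down the entries using \cite[Section 12]{CLP2016} with $\epsilon_+ = \mu(x)/\mu(y_+) - 1$ for $y_+ \in V_2$ (no $\epsilon_-$ is needed as $x$ is a root, and $V_3$ is the outermost layer so its vertices have neighbours only in $V_2$ and within $V_3$), one obtains a block matrix of precisely the shape required by Lemma \ref{lem:usefact}.

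Once the blocks are in place, Lemma \ref{lem:usefact}(a) produces a $(d_2-1)=1$-dimensional eigenspace for $\alpha_2$ and a $(d_3-1)=2$-dimensional eigenspace for $\alpha_3$. Both values can be read off the diagonal blocks and should be strictly positive in both the normalized and non-normalized settings, exactly as in the previous two theorems. Combined with the identity $A\,\mathbf{1}_6 = 0$, this reduces the problem to showing that the reduced $3 \times 3$ matrix $A_{\rm{red}}$ (defined as in Lemma \ref{lem:usefact}) is positive semidefinite with one-dimensional kernel $\IR \cdot \mathbf{1}_3$.

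For this last step, the characteristic polynomial of $A_{\rm{red}}$ takes the form $\chi(t) = t^3 - p_2 t^2 + p_1 t$ (the missing constant term reflects $\mathbf{1}_3$ lying in the kernel), and since $A_{\rm{red}}$ is real symmetric, its two non-zero eigenvalues are automatically real, so the positivity criterion reduces to checking $p_1, p_2 > 0$. This is a concrete numerical check, handled by a short Maple computation in the spirit of Appendix \ref{app:thm:atbcde-BE} (and easily verifiable by hand given the small size), carried out separately for the normalized and non-normalized values of $\epsilon_+$. The conclusion then follows exactly as at the end of the proof of Theorem \ref{thm:atabcde-BE}: expanding an arbitrary non-constant element of the $\widehat{W}$-complement in the eigenbasis of $A_{\rm{red}}$ yields a strictly positive value of the quadratic form.

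The only non-routine point is the careful bookkeeping of the block entries in this boundary situation, where neither a layer below $V_1$ nor a layer beyond $V_3$ is present; these boundary effects shift some entries of $A_{11}$, $A_{33}$ and the off-diagonal blocks involving $V_3$ relative to the analogues appearing in Theorem \ref{thm:atabcde-BE}. Once the entries are correctly determined, the positivity verification is strictly easier than in the earlier two cases, thanks to the size of $A_{\rm{red}}$ dropping from $6 \times 6$ (respectively $5 \times 5$) down to $3 \times 3$.
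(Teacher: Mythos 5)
Your proposal follows essentially the same route as the paper: form $A = 4\mu(x)^2\Gamma_2(x)$ over $B_2(x)=\{x\}\sqcup V_2\sqcup V_3$ (sizes $1,2,3$), use Lemma \ref{lem:usefact} to peel off the strictly positive eigenvalues $18+20\epsilon_+$ (simple) and $2+2\epsilon_+$ (double), note $A\mathbf{1}_6=0$, and reduce to the $3\times 3$ matrix $A_{\rm{red}}$ whose characteristic polynomial $t^3-p_2t^2+p_1t$ has strictly positive $p_1,p_2$ (the paper gets $t^3-\tfrac{112}{5}t^2+\tfrac{144}{5}t$ and $t^3-44t^2+72t$ via Maple). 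The only cosmetic difference is that the paper obtains the block entries by specializing the general $\AT((a,b,c,d,e))$ computation to $a=b=0$, $(c,d,e)=(1,2,3)$ rather than re-deriving them with boundary adjustments; the argument is otherwise identical and correct.
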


\begin{proof}
  As in the previous proof, we consider the matrix
  $A = 4 \mu(x)^2 \Gamma_2(x)$ and choose $(c,d,e) = (1,2,3)$. This
  time $A$ is of the form $A = (A_{ij})_{i,j \in I}$ with
  $I = \{1,3,4\}$ and $A_{ij}$ as in the proof of Theorem
  \ref{thm:atabcde-BE} with $a=b=0$. As before, we conclude that $A$
  has a simple eigenvalue $\alpha_3 = 18+20\epsilon_+ > 0$ and a double
  eigenvalue $\alpha_5 = 2 + 2 \epsilon_+ > 0$ and $A {\bf 1}_6 = 0$. 
  $A_{\rm{red}}$ is now a symmetric $3 \times 3$ matrix with characteristic
  polynomial (see Maple calculations in Appendix \ref{app:thm:atbcde-BE})
  $$ \chi(t) = t^3-\frac{112}{5}t^2+\frac{144}{5}t $$
  in the normalized case and
  $$ \chi(t) = t^3-44t^2+72t $$
  in the non-normalized case. Similarly as before, this implies that 
  $A$ is positive semidefinite with one-dimensional kernel, that is,
  ${\mathcal K}_{G,x}(\infty) > 0$.
\end{proof}

\begin{rem}
  Alternatively, Theorem \ref{thm:atcde-BE} could be proved, in the
  non-normalized case, by employing the fact that the root of
  $\AT((1,2,3))$ is $S^1$-out regular. For the definition of this notion
  and the corresponding curvature calculation see \cite[Definition 1.5
  and Theorem 5.7]{CLP2016}.
\end{rem}

The above theorems imply that the infinite antitree $\AT((k))$ has
strictly positive Bakry-{\'E}mery curvature in all vertices. We
finally prove that there is no uniform positive lower curvature bound.

\begin{theorem} \label{thm:curv0BE}
  Let $G = \AT((k))$ be the infinite antitree with vertex set
  $V = \bigcup_{k=1}^\infty V_k$. Then we have both in the normalized
  and normalized setting
  $$ \inf_{x \in V} {\mathcal{K}}_{G,x}(\infty) = 0. $$
\end{theorem}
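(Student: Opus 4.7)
The normalized case is immediate from the discrete Bonnet--Myers result in Theorem~\ref{thm:discBM-norm}. If $K := \inf_{x \in V} \mathcal{K}_{G,x}(\infty)$ were strictly positive, then $G$ would satisfy $CD(K,\infty)$ globally and hence be a finite graph with $\diam(G) \le 2/K$, contradicting the fact that $G = \AT((k))$ is infinite. So $K = 0$.

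For the non-normalized case this Bonnet--Myers shortcut is unavailable: the non-normalized Bonnet--Myers theorem (Theorem~1.2) requires a uniform bound on $d_x$, while $\AT((k))$ has $d_x \to \infty$ as $x$ ranges through higher generations. The plan is instead to exhibit, for each $k \ge 3$ and any vertex $x \in V_k$, an explicit test function $f : V \to \IR$ whose Rayleigh quotient $\Gamma_2(f)(x)/\Gamma(f)(x)$ at $x$ is $O(1/k)$, which forces $\mathcal{K}_{G,x}(\infty) \to 0$ as $k \to \infty$.

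The natural candidate is the ``discrete distance'' radial function
\[
  f(y) = -j \qquad \text{for } y \in V_j,\ |j - k| \le 2,
\]
extended arbitrarily outside $B_2(x)$ (values there are irrelevant for $\Gamma_2(f)(x)$). The decisive observation is that, because the generation sizes grow linearly ($a_j = j$), the non-normalized Laplacian is \emph{constant} on the full 1-ball around $x$: for any $y \in V_j$ with $j \in \{k-1,k,k+1\}$,
\[
  \Delta f(y) = a_{j-1} - a_{j+1} = (j-1) - (j+1) = -2.
\]
Since $\Delta f(z) - \Delta f(x) = 0$ for every $z \sim x$, the cross term in Bochner's formula vanishes, giving $\Gamma(f, \Delta f)(x) = 0$ and therefore
\[
  \Gamma_2(f)(x) = \tfrac{1}{2}\, \Delta \Gamma(f)(x).
\]
A short direct calculation yields $\Gamma(f)(x) = k$ together with $\Gamma(f)(y) = k-1, k, k+1$ for $y \in V_{k-1},\ V_k\setminus\{x\},\ V_{k+1}$ respectively, from which $\Delta \Gamma(f)(x) = -(k-1) + (k+1) = 2$, and hence $\Gamma_2(f)(x) = 1$. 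Consequently
\[
  \mathcal{K}_{G,x}(\infty) \;\le\; \frac{\Gamma_2(f)(x)}{\Gamma(f)(x)} \;=\; \frac{1}{k} \;\xrightarrow[k \to \infty]{}\; 0.
\]

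The only genuine step is identifying the correct test function; once one notices that the discrete linear function $f(y) = -j$ has locally constant Laplacian on $B_1(x)$ thanks to the linear growth $a_j = j$, everything else reduces to a routine Bochner computation, and no eigenvalue analysis or Maple assistance is needed for this last theorem.
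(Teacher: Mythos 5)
Your proof is correct, and for the non-normalized case it takes a genuinely different and more elementary route than the paper. The normalized case is handled identically (Bonnet--Myers contradiction). For the non-normalized case, the paper does not use a test function at all: it writes out the full $|B_2(x)|\times|B_2(x)|$ matrix of $4(\Gamma_2(x)-\delta\Gamma(x))$ for $x\in V_{n+2}$, reduces it to a $6\times 6$ matrix via its block-structure lemma (Lemma \ref{lem:usefact}), and uses a Maple computation of the characteristic polynomial to show that the product of the five nontrivial eigenvalues equals $p_1(\delta,n)=-240\delta n^9+O(n^8)$, hence is negative for large $n$, so $CD(\delta,\infty,x)$ fails. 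Your argument replaces all of this with the single radial test function $f=-r$ (minus the generation index), whose non-normalized Laplacian is constant ($\Delta f\equiv -2$, because $a_{j-1}-a_{j+1}=-2$ under linear growth), so that $\Gamma(f,\Delta f)(x)=0$ and $\Gamma_2(f)(x)=\tfrac12\Delta\Gamma(f)(x)=1$ while $\Gamma(f)(x)=k$; your arithmetic here checks out, and the bound $\mathcal{K}_{G,x}(\infty)\le\Gamma_2(f)(x)/\Gamma(f)(x)$ is legitimate since $\Gamma(f)(x)=k>0$. What your approach buys is a computer-free proof with an explicit quantitative decay rate $\mathcal{K}_{G,x}(\infty)\le 1/k$, and it exploits the structural reason the curvature degenerates (the distance-to-root function has constant Laplacian, the same function $r$ appearing in the stochastic-completeness literature cited in Section \ref{sec:hist}); what the paper's heavier eigenvalue computation buys is machinery that is reused verbatim from the positivity proofs and generalizes to perturbations of $\Gamma_2$ where no clever test function is apparent. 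One small point worth making explicit in your write-up: the test function only gives $\inf_x\mathcal{K}_{G,x}(\infty)\le 0$; the matching lower bound $\inf\ge 0$ comes from Theorems \ref{thm:atabcde-BE}--\ref{thm:atcde-BE}, which show $\mathcal{K}_{G,x}(\infty)>0$ at every vertex --- the paper relies on this equally implicitly.
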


\begin{proof} Let us first consider the normalized setting. If we had
  $\inf_{x \in V} {\mathcal{K}}_{G,x}(\infty) = K > 0$, then the
  discrete Bonnet-Myers Theorem (Theorem \ref{thm:discBM-norm} of the
  Introduction) would imply that $G$ has bounded diameter, which is a
  contradiction. This argument does not work in the non-normalized
  setting. Let us now show in the non-normalized setting that
  $$ \lim_{n \to \infty, x \in V_n} {\mathcal{K}}_{G,x}(\infty) = 0. $$
  For $\delta > 0$, let $A(\delta,n) = 4(\Gamma_2(x) - \delta \Gamma(x))$
  for an arbitrary vertex $x \in V_{n+2}$, $n \in \IN$, with respect to the
  vertex order
  $$ B_2(x) = \{x\} \ \sqcup (V_{n+2} \backslash \{ x \}) \sqcup V_{n+3} \sqcup 
  V_{n+1} \sqcup V_{n+4} \sqcup V_{n}. $$
  The entries of $2\Gamma(x)$ in the non-normalized setting are given
  in \cite[(2.2)]{CLP2016}, and using this information, we see that
  that matrix $A(\delta,n)$ is of the following block structure
  $A(\delta,n) = (A_{ij}(\delta,n))_{1 \le i,j \le 6}$:
  \begin{eqnarray*}
    A_{11}(\delta,n) &=& (3n+5)(3n+8)-(6n+10)\delta, \\
    A_{12}(\delta,n) &=& (-3n-8+2\delta)J_{1,n+1}, \\
    A_{13}(\delta,n) &=& (-4n-12+2\delta)J_{1,n+3}, \\
    A_{14}(\delta,n) &=& (-4n-8+2\delta)J_{1,n+1}, \\
    A_{15}(\delta,n) &=& (n+3)J_{1,n+4}, \\
    A_{16}(\delta,n) &=& (n+1)J_{1,n},\\
    A_{22}(\delta,n) &=& (9n+18-2\delta)\Id_{n+1}-2J_{n+1}, \\
    A_{23}(\delta,n) &=& -2J_{n+1,n+3}, \\
    A_{24}(\delta,n) &=& -2J_{n+1,n+1}, \\
    A_{25}(\delta,n) &=& 0_{n+1,n+4}, \\
    A_{26}(\delta,n) &=& 0_{n+1,n}, \\
    A_{33}(\delta,n) &=& (8n+26-2\delta)\Id_{n+3}-2J_{n+3}, \\
    A_{34}(\delta,n) &=& 2J_{n+3,n+1}, \\
    A_{35}(\delta,n) &=& -2J_{n+3,n+4}, \\
    A_{36}(\delta,n) &=& 0_{n+3,n},
  \end{eqnarray*}
  \begin{eqnarray*}
    A_{44}(\delta,n) &=& (8n+6-2\delta)\Id_{n+1} - 2J_{n+1}, \\
    A_{45}(\delta,n) &=& 0_{n+1,n+4}, \\
    A_{46}(\delta,n) &=& -2J_{n+1,n}, \\
    A_{55}(\delta,n) &=& (n+3)\Id_{n+4}, \\
    A_{56}(\delta,n) &=& 0_{n+4,n}, \\
    A_{66}(\delta,n) &=& (n+1)\Id_n.
  \end{eqnarray*}
  Let $\delta > 0$. Let $\lambda_j(\delta,n)$, $j \in \{1,\dots, 5\}$
  be the eigenvalues of the $6 \times 6$ matrix
  $A(\delta,n)_{red}$. The characteristic polynomial of
  $A(\delta,n)_{red}$ is of the form
  $$ \chi_{\delta,n}(t) = t^6 - p_5(\delta,n) t^5 + p_4(\delta,n) t^4 
  - p_3(\delta,n) t^3 + p_2(\delta,n) t^2 - p_1(\delta,n) t,
  $$
  with polynomials $p_1,p_2,\dots,p_5$, and a Maple calculation shows that
  \begin{equation} \label{eq:p1form} 
  p_1(\delta,n) = - 240 \delta n^9 + q_8(\delta) n^8 + \dots + 
  q_1(\delta) n + q_0(\delta), 
  \end{equation}
  with polynomials $q_0,q_1,\dots,q_8$ (see Appendix
  \ref{app:thm:curv0BE}). By Vieta's formulas, we have
  $$ p_1(\delta,n) = \left( \prod_{j=1}^5 \lambda_j(\delta,n) \right), $$
  where $\lambda_j(\delta,n)$, $j=1,\dots,5$ are the eigenvalues (in
  ascending order) of $A(\delta,n)_{red}$ restricted to the orthogonal
  complement to the eigenvector ${\bf 1}_6$.  We conclude from
  \eqref{eq:p1form} that there exists $k_0 > 0$ with
  $p_1(\delta,n) < 0$ for all $n \ge n_0$, i.e.,
  $\lambda_1(\delta,n) < 0$.  Applying Lemma
  \ref{lem:usefact}, we conclude
  $$ (\widetilde w)^\top A(\delta,n) \widetilde w = w^\top 
  A(\delta,n)_{red} w = \lambda_1(\delta,n) \Vert w \Vert^2 < 0. $$
  This implies that ${\mathcal{K}}_{G,x}(\infty) \in (0,\delta)$ for every 
  $x \in V_{n+2}$ with $n \ge n_0$. 
\end{proof}

\section{Ollivier Ricci curvature of antitrees}
\label{sec:ollcalcs}

In this section, we calculate Ollivier-Ricci curvature for all
idlenesses $p \in [0,1]$ and the Lin-Lu-Yau curvature of all types of
edges in antitrees.

\begin{theorem}[Radial root-edges of an antitree]
Let $1\leq a\leq b\leq c,$ $\{x,y\}$ a radial root edge of the antitree $\mathcal{AT}((a,b,c)),$ that is $x\in V_{1}, y \in V_{2}.$ Then we have:
\begin{enumerate}[(a)]
\item
If $a = 1,$ 
\\
$\kappa_{p}(x,y) = \begin{cases} \frac{b-1}{b+c} + \frac{b+2c+1}{b+c}p &  \text{if $p \in [0,\frac{1}{b+c+1}]$,}\\
 \frac{b+1}{b+c}(1- p), &  \text{if $p\in [\frac{1}{b+c+1},1]$.}
\end{cases}$
\\
Therefore,
$$\kappa_{LLY}(x,y) = \frac{b+1}{b+c}.$$

\item
If $a\geq 3$ or $(a = 2 \:{\rm and}\: b<c),$
\\
$\kappa_{p}(x,y) = $
\\
$ \frac{1}{(a+b-1)(a+b+c-1)}\begin{cases} ((a+b-1)^{2}-c(a-1))+c(b+2a-2)p &  \text{if $p \in [0,\frac{1}{a+b+c}]$,}\\
 ((a+b)(a+b-1)-c(a-1))(1- p), &  \text{if $p\in [\frac{1}{a+b+c},1]$.}
\end{cases}$
\\
Therefore,
$$\kappa_{LLY}(x,y) = \frac{(a+b)(a+b-1)-c(a-1)}{(a+b-1)(a+b+c-1)}.$$
\item
If $a = 2, b = c,$ 
\\
$\kappa_{p}(x,y) = \begin{cases} \frac{b}{2b+1} + \frac{3b+2}{2b+1}p &  \text{if $p \in [0,\frac{1}{(2b+1)(b+1)1}]$,}\\
 \frac{b^{2}+b+1}{(2b+1)(b+1)}+\frac{b^{2}+2b}{(2b+1)(b+1)}p, &  \text{if $p\in [\frac{1}{(2b+1)(b+1)+1},\frac{1}{2(b+1)}]$,}\\
 \frac{b^{2}+2b+2}{(2b+1)(b+1)}(1- p), &  \text{if $p\in [\frac{1}{2(b+1)},1]$.}
\end{cases}$
\\
Therefore,
$$\kappa_{LLY}(x,y) = \frac{b^{2}+2b+2}{(2b+1)(b+1)}.$$
\end{enumerate}
\end{theorem}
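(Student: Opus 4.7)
The plan is to apply Kantorovich duality and, for each case and each linear piece of $p \mapsto \kappa_{p}(x,y)$, exhibit a $1$-Lipschitz potential $\phi$ whose dual value matches the claimed formula, together with a matching transportation plan certifying optimality. Writing $V_{1} = \{x\} \sqcup V_{1}'$ and $V_{2} = \{y\} \sqcup V_{2}'$ with $|V_{1}'| = a-1$ and $|V_{2}'| = b-1$, we have $d_{x} = a+b-1$ and $d_{y} = a+b+c-1$, and the signed measure $\mu_{x}^{p} - \mu_{y}^{p}$ takes value $p - (1-p)/d_{y}$ at $x$, $(1-p)/d_{x} - p$ at $y$, $(1-p)c/(d_{x}d_{y})$ at each vertex of $V_{1}' \cup V_{2}'$, and $-(1-p)/d_{y}$ at each vertex of $V_{3}$.

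The three candidate potentials, each constant on each of the five classes $\{x\}$, $V_{1}'$, $\{y\}$, $V_{2}'$, $V_{3}$, will be:
\begin{enumerate}[(I)]
\item $\phi^{L}$, taking value $1$ on $\{x\} \cup V_{1}'$, $0$ on $V_{2}$, and $-1$ on $V_{3}$;
\item $\phi^{M}$, taking value $1$ on $\{x\} \cup V_{2}$, $2$ on $V_{1}'$, and $0$ on $V_{3}$;
\item $\phi^{T}$, taking value $1$ on $\{x\} \cup V_{3}$ and $2$ on $V_{1}' \cup V_{2}$.
\end{enumerate}
Each is $1$-Lipschitz because adjacent classes differ in value by at most one. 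Plugging each into the Kantorovich functional produces an affine function of $p$, and short algebra gives the differences $\phi^{L} - \phi^{M} = p - (1-p)/d_{y}$ and $\phi^{T} - \phi^{M} = ((a+b-1)-c(a-1))(1-p)/(d_{x}d_{y}) - p$, from which the dominant potential and the breakpoints between linear pieces can be read off immediately.

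For case (a), $V_{1}' = \emptyset$ collapses $\phi^{M}$ to an inferior constant, while the collapsed $\phi^{T}$ produces the small-$p$ formula and $\phi^{L}$ the large-$p$ formula, meeting at $p = 1/(d_{y}+1) = 1/(b+c+1)$. For case (b), the hypothesis ``$a \ge 3$ or $(a = 2$ and $b < c)$'' together with $a \le b \le c$ forces $(a+b-1) - c(a-1) \le 0$, hence $\phi^{T} \le \phi^{M}$ uniformly on $[0,1]$; only $\phi^{M}$ (small $p$) and $\phi^{L}$ (large $p$) contribute, meeting at $p = 1/(d_{y}+1) = 1/(a+b+c)$. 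For case (c), one has $(a+b-1)-c(a-1) = 1 > 0$, so all three potentials participate: $\phi^{T}$ dominates on $[0, 1/(d_{x}d_{y}+1)]$, $\phi^{M}$ on $[1/(d_{x}d_{y}+1), 1/(d_{y}+1)]$, and $\phi^{L}$ on $[1/(d_{y}+1), 1]$, reproducing the three stated breakpoints $p = 1/((b+1)(2b+1)+1)$ and $p = 1/(2(b+1))$. Since $\gcd(d_{x}, d_{y}) = \gcd(b+1, 2b+1) = 1$, the BCLMP theorem allows the first linear piece to extend all the way to $1/(\mathrm{lcm}(d_{x}, d_{y})+1) = 1/(d_{x}d_{y}+1)$, so no finer subdivision of that piece can occur.

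To certify optimality I will construct matching transportation plans on each piece: the common mass on $V_{1}' \cup V_{2}'$ stays in place; the atoms at $x$ and $y$ exchange mass along the edge $xy$ in the direction dictated by the signs of $p - (1-p)/d_{y}$ and $(1-p)/d_{x} - p$; and the deficit at $V_{3}$ is supplied from $V_{2}$ along radial edges, with any residual excess on $V_{1}' \cup V_{2}'$ routed through $V_{2}$ in the small-$p$ regimes. Equality of primal and dual values closes each piece. The main obstacle is case (c), where three distinct pieces appear and one must both verify that $\phi^{T}, \phi^{M}, \phi^{L}$ are optimal on their respective subintervals and produce explicit matching transport plans with correct mass apportionments; cases (a) and (b) are then simpler specializations of this analysis.
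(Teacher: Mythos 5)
Your dual-side computations are correct: with $d_x=a+b-1$, $d_y=a+b+c-1$, the three class-constant potentials $\phi^T,\phi^M,\phi^L$ do yield dual values whose maximum is exactly $1-\kappa_p(x,y)$ as claimed in all three cases (I checked that $1-\mathrm{dual}(\phi^M)$ reproduces the small-$p$ formula of (b), $1-\mathrm{dual}(\phi^L)$ the $\kappa_{LLY}$ formulas, and that your two difference identities $\mathrm{dual}(\phi^L)-\mathrm{dual}(\phi^M)=p-\tfrac{1-p}{d_y}$ and $\mathrm{dual}(\phi^T)-\mathrm{dual}(\phi^M)=\tfrac{((a+b-1)-c(a-1))(1-p)}{d_xd_y}-p$ hold, so the breakpoints and the case trichotomy governed by the sign of $(a+b-1)-c(a-1)$ come out right). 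This is the same underlying strategy as the paper --- Kantorovich duality on the quotient of $B_2(x)\cup B_2(y)$ by the symmetry classes, plus the structural results of \cite{BCLMP} --- but organized differently: the paper treats each case and each $p$-interval separately, first building an explicit transport plan (upper bound on $W_1$), then a matching potential, and dispatches the interval $[\tfrac{1}{d_y+1},1]$ via \cite[Theorem 4.4]{BCLMP} rather than by a potential; you instead compute all three dual values uniformly in $(a,b,c,p)$ and let the case distinctions fall out of pairwise comparisons, which is more systematic and explains \emph{why} the hypotheses of (a),(b),(c) are what they are. What your proposal buys is this unification; what it still owes is the entire primal side. The potentials only give $\kappa_p\le 1-\max_\phi\mathrm{dual}(\phi)$, and the matching transport plans are where the paper does its real work (e.g.\ the argument in case (b) that a definite amount of mass must travel distance $2$ from $V_1\setminus\{x\}$ to $V_3$, proved by the sign analysis $p\le\frac{a+b+c-ac-1}{(a+b)(a+b-1)+bc}\le 0$). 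Your one-sentence description of the plans is slightly off as stated --- the vertices of $V_1'\cup V_2'$ carry \emph{surplus} under $\mu_x^p-\mu_y^p$, so mass must leave them (toward $x$, $y$ or $V_3$) rather than ``stay in place'' with only a residual routed onward --- so you should write the plans out and verify complementary slackness against each $\phi$ on each subinterval (or, for the last piece, invoke \cite[Theorem 4.4]{BCLMP} as the paper does). Also, in case (a) the statement that $\phi^M$ collapses to an ``inferior constant'' is imprecise: its dual value still depends on $p$ and ties with the winner at the breakpoint; it is simply never the strict maximum.
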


\begin{proof}
\begin{enumerate}[(a)]
\item
Consider the following graph
$$\begin{tikzpicture}[x=1.5cm, y=1.5cm,
	vertex/.style={
		shape=circle, fill=black, inner sep=1.5pt	
	}
]

\node[vertex, label=below:$y'$] (1) at (0, 0) {};
\node[vertex, label=above:$v$] (4) at (0, 1) {};
\node[vertex, label=below:$z$] (2) at (1, 0) {};
\node[vertex, label=below:$x'$] (5) at (-1, 0) {};

\draw (1) -- (2);
\draw (1) -- (5);
\draw (1) -- (4);
\draw (5) -- (4);
\draw (2) -- (4);

\end{tikzpicture}
$$
with associated probability measures $\mu^{p}_{1},\mu^{p}_{2},$ defined as
$$\mu^{p}_{1}(x') = p,\:\: \mu^{p}_{1}(y') = \frac{1}{b}(1-p),\:\: \mu^{p}_{1}(v) = \frac{b-1}{b}(1-p),\:\: \mu^{p}_{1}(z) = 0,$$ 
$$\mu^{p}_{2}(x') = \frac{1}{b+c}(1-p),\:\: \mu^{p}_{2}(y') = p,\:\: \mu^{p}_{2}(v) = \frac{b-1}{b+c}(1-p),\:\: \mu^{p}_{2}(z) = \frac{c}{b+c}(1-p).$$ 
One can verify that, due to the high connectivity of $\mathcal{AT}((a,b,c)),$ we have $W_{1}(\mu^{p}_{x}, \mu^{p}_{y}) = W_{1}(\mu^{p}_{1}, \mu^{p}_{2}),$ where $x'$ represents the root $x$, $y'$ represents the vertex $y$, the vertex $v$ represents all neighbours of $y$ in $V_{2},$ and the vertex $z$ represents all vertices in $V_{3}.$  
\\
\\
Note that $\mu_{1}^{p}(x')< \mu_{2}^{p}(x')$ if and only if $p < \frac{1}{b+c+1}.$   We will distinguish the cases. 
\\
\\
Case $p < \frac{1}{b+c+1}:$
\\
Note that
$$\mu_1^{p}(x') < \mu_{2}^{p} (x'),\:\:\: \mu_1^{p}(z) < \mu_{2}^{p} (z),$$ 
$$\mu_1^{p}(y') > \mu_{2}^{p} (y'),\:\:\: \mu_1^{p}(w) > \mu_{2}^{p} (w).$$ 
Thus when transporting $\mu_{1}^{p}$ to $\mu_{2}^{p}$ the only vertices that gain mass are $x'$ and $z.$ Note further all this mass can be transported over a distance of $1$. Thus
\begin{align*}
W_{1}(\mu^{p}_{x}, \mu^{p}_{y}) & = W_{1}(\mu^{p}_{1}, \mu^{p}_{2})
\\
& \leq \mu_{2}^{p} (x')+ \mu_{2}^{p} (z) - \mu_{1}^{p} (x') - \mu_{1}^{p} (z) 
\\
& = \frac{c+1}{b+c} -\frac{b+2c+1}{b+c}p .
\end{align*} 
We verify that this is in fact equality by constructing the following $\phi\in 1-$Lip,
$$\phi(x') = 0, \phi(y') = 1, \phi(w) = 1, \phi(z) = 0. $$
Then, by Theorem \ref{Kantorovich},
$$W_{1}(\mu^{p}_{x}, \mu^{p}_{y}) = W_{1}(\mu^{p}_{1}, \mu^{p}_{2})\geq  \sum_{v}\phi(v)(\mu^{p}_{1}(v)-\mu^{p}_{2}(v)) = \frac{c+1}{b+c} -\frac{b+2c+1}{b+c}p.$$
Therefore 
$$W_{1}(\mu^{p}_{x}, \mu^{p}_{y}) = \frac{c+1}{b+c} -\frac{b+2c+1}{b+c}p.$$
and 
\begin{equation}\label{eq1}
\kappa_{p}(x,y) = \frac{b-1}{b+c} + \frac{b+2c+1}{b+c}p,
\end{equation}
for $p \in
[0,\frac{1}{b+c+1}).$ By continuity of
$p\mapsto\kappa_{p}(x,y)$ this also holds for $p = \frac{1}{b+c+1}.$
\\
\\
Case $p \geq \frac{1}{b+c+1}:$
\\
By \cite[Theorem 4.4]{BCLMP}, $\kappa_{p}(x,y) =
\frac{b+c+1}{b+c}\kappa_{\frac{1}{b+c+1}}(1- p)$ for $p\in
[\frac{1}{b+c+1},1].$ Thus
$$\kappa_{p}(x,y) = \begin{cases} \frac{b-1}{b+c} + \frac{b+2c+1}{b+c}p &  \text{if $p \in [0,\frac{1}{b+c+1}]$,}\\
 \frac{b+c+1}{b+c}\kappa_{\frac{1}{b+c+1}}(1- p), &  \text{if $p\in [\frac{1}{b+c+1},1]$.}
\end{cases}$$
Therefore it only remains to show that $\frac{b+c+1}{b+c}\kappa_{\frac{1}{b+c+1}} = \frac{b+1}{b+c}.$
\\
\\
We have, using \eqref{eq1},
\begin{align*}
\frac{b+c+1}{b+c}\kappa_{\frac{1}{b+c+1}} & = \frac{b+c+1}{b+c}\left(\frac{b-1}{b+c} + \frac{b+2c+1}{b+c}\frac{1}{b+c+1}\right)
\\
& = \frac{b+1}{b+c}.
\end{align*} 

\item
Similar to above we consider the simplified graph representing $\mathcal{AT}((a,b,c)),$
$$\begin{tikzpicture}[x=1.5cm, y=1.5cm,
	vertex/.style={
		shape=circle, fill=black, inner sep=1.5pt	
	}
]

\node[vertex, label=below:$y'$] (1) at (0, 0) {};
\node[vertex, label=above:$v$] (4) at (0, 1) {};
\node[vertex, label=above:$u$] (3) at (-1, 1) {};
\node[vertex, label=below:$z$] (2) at (1, 0) {};
\node[vertex, label=below:$x'$] (5) at (-1, 0) {};

\draw (1) -- (2);
\draw (1) -- (5);
\draw (1) -- (4);
\draw (5) -- (4);
\draw (2) -- (4);
\draw (5) -- (3);
\draw (3) -- (4);
\draw (3) -- (1);

\end{tikzpicture}
$$

with associated probability measures $\mu^{p}_{1},\mu^{p}_{2},$ defined as
$$\mu^{p}_{1}(x') = p,\:\: \mu^{p}_{1}(y') = \frac{1}{a+b-1}(1-p),\:\: \mu^{p}_{1}(u) = \frac{a-1}{a+b-1}(1-p),$$ 
$$\mu^{p}_{1}(v) = \frac{b-1}{a+b-1}(1-p),\:\:  \mu^{p}_{1}(z) = 0,$$ 
$$\mu^{p}_{2}(x') = \frac{1}{a+b+c-1}(1-p),\:\: \mu^{p}_{2}(y') = p,\:\: \mu^{p}_{2}(u) = \frac{a-1}{a+b+c-1}(1-p),$$
$$ \mu^{p}_{2}(v) = \frac{b-1}{a+b+c-1}(1-p),\:\: \mu^{p}_{2}(z) = \frac{c}{a+b+c-1}(1-p).$$ 
Again, one can verify that, due to the high connectivity of $\mathcal{AT}((a,b,c)),$ we have $W_{1}(\mu^{p}_{x}, \mu^{p}_{y}) = W_{1}(\mu^{p}_{1}, \mu^{p}_{2}),$ where $x'$ represents the root $x$, $y'$ represents the vertex $y$, the vertex $u$ represents all neighbours of $x$ in $V_{1},$ the vertex $v$ represents all neighbours of $y$ in $V_{2},$ and the vertex $z$ represents all vertices in $V_{3}.$  
\\
\\
Let $p\in (0,\frac{1}{a+b+c}).$ One can check that 
$$\mu_{1}^{p}(x')<\mu_{2}^{p}(x'),\:\: \mu_{1}^{p}(z)<\mu_{2}^{p}(z),$$
$$\mu_{1}^{p}(y')>\mu_{2}^{p}(y'),\:\: \mu_{1}^{p}(u)>\mu_{2}^{p}(u),\:\: \mu_{1}^{p}(v)>\mu_{2}^{p}(v).$$
Thus the vertices $x'$ and $z$ must gain mass and the vertices $u,v$ and $y$ must lose mass. We now show that some mass must be transported from $u$ to $z$. Suppose that no mass is moved from $u$ to $v$. Then the mass available to move from $v$ and $y'$ will be sufficient when moved to $z$. Therefore
$$\mu_{1}^{p}(y')+\mu_{1}^{p}(v)-\mu_{2}^{p}(y')-\mu_{2}^{p}(v)  \geq \mu_{2}^{p}(z) - \mu_{1}^{p}(z).$$
Substituting in the values of the measures and rearranging gives $p\leq \frac{a+b+c-ac-1}{(a+b)(a+b-1)+bc}\leq 0,$ a contradiction. Therefore some mass must be transported from $u$ to $z$ over a distance of $2$ and all other mass can be transported over a distance of $1$. 
\\
\\
Thus
\begin{align*}
W_{1}(\mu^{p}_{x}, \mu^{p}_{y}) = & W_{1}(\mu^{p}_{1}, \mu^{p}_{2})
\\
 \leq & (\mu_{2}^{p}(x)-\mu_{1}^{p}(x))+2(\mu^{p}_{1}(u)-\mu^{p}_{2}(u)-(\mu_{2}^{p}(x)-\mu_{1}^{p}(x)))
\\ 
& +(\mu^{p}_{1}(y')+\mu^{p}_{1}(v)-\mu^{p}_{2}(y')-\mu^{p}_{2}(v))
\\
= & (1-p)\left(\frac{a-1}{a+b-1}+\frac{c+1-a}{a+b+c-1}\right).
\end{align*}
We verify that this is in fact equality by constructing the following $\phi\in 1-$Lip,
$$\phi(x') = 0, \phi(y') = 0, \phi(u) = 1, \phi(v) = 0, \phi(z) =  -1. $$
Therefore 
\begin{align*}
\kappa_{p}(x,y) & = 1 - (1-p)\left(\frac{a-1}{a+b-1}+\frac{c+1-a}{a+b+c-1}\right) 
\\
& = \frac{((a+b-1)^{2}-c(a-1))+(bc+2c(a-1))p}{(a+b-1)(a+b+c-1)},
\end{align*}
for $p \in (0,\frac{1}{a+b+c}).$ 
\\
\\
As before, by \cite[Theorem 4.4]{BCLMP}, $\kappa_{p}(x,y) = \frac{a+b+c}{a+b+c-1}\kappa_{\frac{1}{a+b+c}}(1- p)$  for $p\in [\frac{1}{a+b+c},1].$ Therefore
$$\frac{a+b+c}{a+b+c-1}\kappa_{\frac{1}{a+b+c}}  = \frac{(a+b)(a+b-1)-c(a-1)}{(a+b-1)(a+b+c-1)},$$
thus completing the proof.

\item
As in part (b)  we consider the simplified graph representing $\mathcal{AT}((a,b,c)),$
$$\begin{tikzpicture}[x=1.5cm, y=1.5cm,
	vertex/.style={
		shape=circle, fill=black, inner sep=1.5pt	
	}
]

\node[vertex, label=below:$y'$] (1) at (0, 0) {};
\node[vertex, label=above:$v$] (4) at (0, 1) {};
\node[vertex, label=above:$u$] (3) at (-1, 1) {};
\node[vertex, label=below:$z$] (2) at (1, 0) {};
\node[vertex, label=below:$x'$] (5) at (-1, 0) {};

\draw (1) -- (2);
\draw (1) -- (5);
\draw (1) -- (4);
\draw (5) -- (4);
\draw (2) -- (4);
\draw (5) -- (3);
\draw (3) -- (4);
\draw (3) -- (1);

\end{tikzpicture}
$$

with the same associated probability measures $\mu^{p}_{1},\mu^{p}_{2},$ defined as
$$\mu^{p}_{1}(x') = p,\:\: \mu^{p}_{1}(y') = \frac{1}{a+b-1}(1-p),\:\: \mu^{p}_{1}(u) = \frac{a-1}{a+b-1}(1-p),$$ 
$$\mu^{p}_{1}(v) = \frac{b-1}{a+b-1}(1-p),\:\:  \mu^{p}_{1}(z) = 0,$$ 
$$\mu^{p}_{2}(x') = \frac{1}{a+b+c-1}(1-p),\:\: \mu^{p}_{2}(y') = p,\:\: \mu^{p}_{2}(u) = \frac{a-1}{a+b+c-1}(1-p),$$
$$ \mu^{p}_{2}(v) = \frac{b-1}{a+b+c-1}(1-p),\:\: \mu^{p}_{2}(z) = \frac{c}{a+b+c-1}(1-p).$$ 
Again, one can verify that, due to the high connectivity of $\mathcal{AT}((a,b,c)),$ we have $W_{1}(\mu^{p}_{x}, \mu^{p}_{y}) = W_{1}(\mu^{p}_{1}, \mu^{p}_{2}),$ where $x'$ represents the root $x$, $y'$ represents the vertex $y$, the vertex $u$ represents all neighbours of $x$ in $V_{1},$ the vertex $v$ represents all neighbours of $y$ in $V_{2},$ and the vertex $z$ represents all vertices in $V_{3}.$ 
\\
\\
We will distinguish the cases.
\\
\\
Case $p \in (0,\frac{1}{(2b+1)(b+1)1}):$
\\
One can check that 
$$\mu_{1}^{p}(x')<\mu_{2}^{p}(x'),\:\: \mu_{1}^{p}(z)<\mu_{2}^{p}(z),$$
$$\mu_{1}^{p}(y')>\mu_{2}^{p}(y'),\:\: \mu_{1}^{p}(u)>\mu_{2}^{p}(u),\:\: \mu_{1}^{p}(v)>\mu_{2}^{p}(v),$$
and 
$$\mu_{1}^{p}(y')+\mu_{1}^{p}(v)-\mu_{2}^{p}(y')-\mu_{2}^{p}(v)  \geq \mu_{2}^{p}(z) - \mu_{1}^{p}(z).$$
Thus the vertices $x'$ and $z$ must gain mass and the vertices $u,v$ and $y.$ must lose mass and it is possible for all mass to be moved over a distance of $1$. 
\\
\\
Thus
\begin{align*}
W_{1}(\mu^{p}_{x}, \mu^{p}_{y}) & = W_{1}(\mu^{p}_{1}, \mu^{p}_{2})
\\
& \leq \mu_{2}^{p} (x')+ \mu_{2}^{p} (z) - \mu_{1}^{p} (x') - \mu_{1}^{p} (z) 
\\
& = \frac{b+1}{2b+1} - \frac{3b+2}{2b+1}p.
\end{align*} 
We verify that this is in fact equality by constructing the following $\phi\in 1-$Lip,
$$\phi(x') = -1, \phi(y') = 0, \phi(u) = 0, \phi(v) = 0, \phi(z) =  -1. $$
Therefore
$$\kappa_{p}(x,y) = \frac{b}{2b+1} + \frac{3b+2}{2b+1}p.$$
Case $p\in (\frac{1}{(2b+1)(b+1)+1},\frac{1}{2(b+1)}):$
\\
One can check that we still have 
$$\mu_{1}^{p}(x')<\mu_{2}^{p}(x'),\:\: \mu_{1}^{p}(z)<\mu_{2}^{p}(z),$$
$$\mu_{1}^{p}(y')>\mu_{2}^{p}(y'),\:\: \mu_{1}^{p}(u)>\mu_{2}^{p}(u),\:\: \mu_{1}^{p}(v)>\mu_{2}^{p}(v)$$
However we now have 
$$\mu_{1}^{p}(y')+\mu_{1}^{p}(v)-\mu_{2}^{p}(y')-\mu_{2}^{p}(v)  \leq \mu_{2}^{p}(z) - \mu_{1}^{p}(z).$$
Thus, as in part (b), some mass must be transported from $u$ to $z$ over a distance of $2$ and all other mass can be transported over a distance of $1$. 
\\
\\
Therefore
\begin{align*}
W_{1}(\mu^{p}_{x}, \mu^{p}_{y}) = & W_{1}(\mu^{p}_{1}, \mu^{p}_{2})
\\
 \leq & (\mu_{2}^{p}(x)-\mu_{1}^{p}(x))+2(\mu^{p}_{1}(u)-\mu^{p}_{2}(u)-(\mu_{2}^{p}(x)-\mu_{1}^{p}(x)))
\\ 
& +(\mu^{p}_{1}(y')+\mu^{p}_{1}(v)-\mu^{p}_{2}(y')-\mu^{p}_{2}(v))
\\
= & (1-p)\left(\frac{1}{b+1}+\frac{b-1}{2b+1}\right).
\end{align*}
We verify that this is in fact equality by constructing the following $\phi\in 1-$Lip,
$$\phi(x') = 0, \phi(y') = 0, \phi(u) = 1, \phi(v) = 0, \phi(z) =  -1. $$
Therefore 
$$\kappa_{p} (x,y) = \frac{b^{2}+b+1}{(2b+1)(b+1)}+\frac{b^{2}+2b}{(2b+1)(b+1)}p.$$
Case $p\in (\frac{1}{2(b+1)},1):$
As before, by \cite[Theorem 4.4]{BCLMP}, $\kappa_{p}(x,y) = \frac{2(b+1)}{2b+1}\kappa_{\frac{1}{2(b+1)}}(1- p)$  for $p\in [\frac{1}{2(b+1)},1].$ Thus
$$\frac{2(b+1)}{2b+1}\kappa_{\frac{1}{2(b+1)}} = \frac{b^{2}+2b+2}{(2b+1)(b+1)},$$
thus completing the proof.
\end{enumerate}
\end{proof}

\begin{theorem}[Inner radial edges of an antitree]
Let $1\leq a\leq b\leq c \leq d,$ $\{x,y\}$ an inner radial edge of the antitree $\mathcal{AT}((a,b,c,d)),$ that is $x\in V_{2}, y \in V_{3}.$ Then we have:
$$\kappa_{p}(x,y) = \left(\frac{2b+c-1}{b+c+d-1} - \frac{2a+b-1}{a+b+c-1}\right)(1-p).$$
\end{theorem}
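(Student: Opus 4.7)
The plan is to show that $W_1(\mu_x^p, \mu_y^p) = 1 - K(1-p)$ on all of $[0,1]$, where $K := \frac{2b+c-1}{b+c+d-1} - \frac{2a+b-1}{a+b+c-1}$, by sandwiching $W_1$ between matching Kantorovich and transport-plan bounds; this immediately yields $\kappa_p(x,y) = K(1-p)$. As in the proof of the radial root-edge theorem, I would first exploit the full symmetry of $\AT((a,b,c,d))$---each generation is a clique and every connection between consecutive generations is complete bipartite---to reduce the $W_1$ computation to a quotient with six vertex-types, represented by $x$, $y$, and one vertex each from $V_1$, $V_2\setminus\{x\}$, $V_3\setminus\{y\}$, $V_4$.

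For the lower bound I propose the single $p$-independent $1$-Lipschitz potential
\begin{equation*}
  \phi(q) := 2 - k \quad \text{for all } q \in V_k,\ k \in \{1,2,3,4\},
\end{equation*}
which is $1$-Lipschitz since spherical edges contribute $0$ and radial edges contribute exactly $1$ to $|\phi(\cdot)-\phi(\cdot)|$. Because $\phi$ vanishes on $V_2$, the Kantorovich sum $\sum_q \phi(q)(\mu_x^p(q) - \mu_y^p(q))$ receives contributions only from $V_1$, $V_3$, $V_4$ and the vertex $y$; a direct algebraic simplification over the common denominator $(a+b+c-1)(b+c+d-1)$ yields the value $1 - K(1-p)$, so Theorem~\ref{Kantorovich} gives $W_1(\mu_x^p, \mu_y^p) \ge 1 - K(1-p)$.

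For the matching upper bound I would use the convex decomposition $\mu_x^p = p\,\delta_x + (1-p)\mu_x^0$ and $\mu_y^p = p\,\delta_y + (1-p)\mu_y^0$. Transporting the first component from $\delta_x$ to $\delta_y$ along the edge $\{x,y\}$ costs $p$, and the $(1-p)$-scaled transport from $\mu_x^0$ to $\mu_y^0$ is realised by routing all excess mass along $\phi$-decreasing paths: an amount $\frac{1}{b+c+d-1}$ from the $V_1$-mass is shipped to $x$ (distance $1$, filling the deficit there), the remaining $V_1$-mass is shipped to $V_4$ (distance $3$), the excess in $V_2\setminus\{x\}$ is shipped to $V_4$ (distance $2$), and the excesses at $y$ and in $V_3\setminus\{y\}$ are shipped to $V_4$ (distance $1$ each). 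The non-negativity of the latter excesses uses $a\le d$, and mass conservation reduces to the identity $(a+1)(b+c+d-1) + (b+c-2)(d-a) = (a+b+c-1)(d+1)$, which is immediate by expansion. Since every unit of mass travels exactly its $\phi$-drop, an edge-by-edge summation shows that the cost of this transport equals the Kantorovich dual value $1-K$, so the total cost is $p + (1-p)(1-K) = 1 - K(1-p)$, giving $W_1(\mu_x^p, \mu_y^p) \le 1 - K(1-p)$.

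The conceptual input is the choice of $\phi$ reflecting the generational structure and working uniformly in $p$; the hardest step will be the algebraic verification that $\sum_q \phi(q)(\mu_x^p(q) - \mu_y^p(q)) = 1 - K(1-p)$, which is routine but requires careful bookkeeping over the common denominator $(a+b+c-1)(b+c+d-1)$.
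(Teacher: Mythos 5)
Your argument is correct, and its core coincides with the paper's: you use the same symmetry reduction to a quotient with one representative per vertex-type, the same transport plan (fill the deficit at $x$ from the $V_1$-mass at distance $1$, ship all remaining surplus to $V_4$ at distances $3$, $2$, $1$), and the same optimal Kantorovich potential --- your $\phi(q)=2-k$ is exactly the paper's $\phi(w)=3$, $\phi(x')=\phi(u)=2$, $\phi(y')=\phi(v)=1$, $\phi(z)=0$ shifted by $-2$, which leaves the dual functional unchanged. Where you genuinely diverge is in handling the idleness parameter: the paper computes only $\kappa_0(x,y)$, observes that its optimal potential satisfies $\phi(x')-\phi(y')=1$, and then invokes \cite[Lemma 4.2]{BCLMP} together with $\kappa_1(x,y)=0$ to conclude that $p\mapsto\kappa_p(x,y)$ is linear on all of $[0,1]$. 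You instead run the primal--dual sandwich uniformly in $p$: the mixture decomposition $\mu_x^p=p\,\delta_x+(1-p)\mu_x^0$ and subadditivity of $W_1$ under convex combinations give the upper bound $p+(1-p)(1-K)$, while the same $p$-independent potential certifies the matching lower bound (the dual sum is $p+(1-p)\bigl(\tfrac{a-c}{a+b+c-1}+\tfrac{c+2d-1}{b+c+d-1}\bigr)=1-K(1-p)$, as a short computation confirms). Your route is self-contained, avoiding the external idleness-function lemma at the cost of slightly more bookkeeping; the paper's route is shorter once that lemma is in hand and is the mechanism it reuses throughout Section \ref{sec:ollcalcs}. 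A small bonus of your setup: since $\phi$ is $1$-Lipschitz on the full antitree (radial distance equals the generation gap), the lower bound via Theorem \ref{Kantorovich} needs no quotient reduction at all; only the explicit transport plan does.
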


\begin{proof}
We first calculate $\kappa_{0}(x,y).$ We consider the simplified graph representing $\mathcal{AT}((a,b,c,d)),$
$$\begin{tikzpicture}[x=1.5cm, y=1.5cm,
	vertex/.style={
		shape=circle, fill=black, inner sep=1.5pt	
	}
]

\node[vertex, label=below:$y'$] (1) at (0, 0) {};
\node[vertex, label=above:$v$] (4) at (0, 1) {};
\node[vertex, label=above:$u$] (3) at (-1, 1) {};
\node[vertex, label=below:$z$] (2) at (1, 0) {};
\node[vertex, label=below:$x'$] (5) at (-1, 0) {};
\node[vertex, label=below:$w$] (6) at (-2, 0) {};

\draw (1) -- (2);
\draw (1) -- (5);
\draw (1) -- (4);
\draw (5) -- (4);
\draw (2) -- (4);
\draw (5) -- (3);
\draw (3) -- (4);
\draw (3) -- (1);
\draw (5) -- (6);
\draw (3) -- (6);

\end{tikzpicture}
$$
with the associated probability measures $\mu_{1},\mu_{2},$ defined as
$$\mu_{1}(x') = 0,\:\: \mu_{1}(y') = \frac{1}{a+b+c-1},\:\: \mu_{1}(w) = \frac{a}{a+b+c-1},$$ 
$$\mu_{1}(u) = \frac{b-1}{a+b+c-1},\:\: \mu_{1}(v) = \frac{c-1}{a+b+c-1},\:\:  \mu_{1}(z) = 0,$$ 
$$\mu_{2}(x') = \frac{1}{b+c+d-1},\:\: \mu_{2}(y') = 0,\:\: \mu_{2}(w) = 0,$$
$$\mu_{2}(u) = \frac{b-1}{b+c+d-1},\:\: \mu_{2}(v) = \frac{c-1}{b+c+d-1},\:\: \mu_{2}(z) = \frac{d}{b+c+d-1}.$$ 
Again, one can verify that, due to the high connectivity of $\mathcal{AT}((a,b,c,d)),$ we have $W_{1}(\mu^{0}_{x}, \mu^{0}_{y}) = W_{1}(\mu_{1}, \mu_{2}),$ where $x'$ represents the vertex $x$, $y'$ represents the vertex $y$, the vertex $w$ represents all the vertices in $V_{1},$ the vertex $u$ represents all neighbours of $x$ in $V_{2},$ the vertex $v$ represents all neighbours of $y$ in $V_{3},$ and the vertex $z$ represents all vertices in $V_{4}.$ 
\\
\\
Observe that 
$$\mu_{1}(x') < \mu_{2}(x'),\:\: \mu_{1}(z) < \mu_{2}(z),\:\: \mu_{1}(u) < \mu_{2}(u),\:\: \mu_{1}(v) < \mu_{2}(v),$$
$$\mu_{1}(y') > \mu_{2}(y'),\:\: \mu_{1}(w) > \mu_{2}(w).$$
Therefore the only vertices that gain mass are $x'$ and $z.$ Now, $\mu_{1}(w)-\mu_{2}(w) = \frac{a}{a+b+c-1} \geq \frac{1}{b+c+d-1} = \mu_{2}(x') - \mu_{1}(x'),$ and so it is possible for $x'$ to receive all of its needed mass from $w.$ If we do this plan and send all other surplus mass to the vertex $z$ we obtain 
\begin{align*}
W_{1}(\mu^{p}_{x}, \mu^{p}_{y}) = & W_{1}(\mu^{p}_{1}, \mu^{p}_{2})
\\
 \leq & (\mu_{2}(x') - \mu_{1}(x')) + 3(\mu_{1}(w)-[\mu_{2}(x') - \mu_{1}(x')]-\mu_{2}(w)) + 2(\mu_{1}(u)-\mu_{2}(u))
\\ 
& +(\mu_{1}(v)-\mu_{2}(v))+(\mu_{1}(y')-\mu_{2}(y'))
\\
= & \frac{3a+2b+c-2}{a+b+c-1} - \frac{2b+c-1}{b+c+d-1}.
\end{align*}
We verify that this is in fact equality by constructing the following $\phi\in 1-$Lip,
$$\phi(w) = 3,\:\:\phi(x') = 2, \:\: \phi(u) = 2,\:\: \phi(y') = 1,\:\: \phi(v) = 1,\:\: \phi(z) =  0. $$
Thus 
$$\kappa_{0}(x,y) = \frac{2b+c-1}{b+c+d-1} - \frac{2a+b-1}{a+b+c-1}.$$
Observe that $\phi(x')-\phi(y') = 1$ and thus, by \cite[Lemma 4.2]{BCLMP}, we have that $p\mapsto \kappa_{p}(x,y)$ is linear. Since $\kappa_{1}(x,y) = 0,$ this gives
$$\kappa_{p}(x,y)\left(\frac{2b+c-1}{b+c+d-1} - \frac{2a+b-1}{a+b+c-1}\right)(1-p).$$
\end{proof}

\begin{theorem}[Spherical root edges of an antitree]\label{spher}
Let $2\leq a\leq b,$ $\{x,y\}$ a spherical root edge of the antitree $\mathcal{AT}((a,b)),$ that is $x,y\in V_{1}.$ Then 
$$\kappa_{p}(x,y) = \begin{cases} \frac{a+b-2}{a+b-1} + \frac{a+b}{a+b-1}p &  \text{if $p \in [0,\frac{1}{a+b}]$,}\\
 \frac{a+b}{a+b-1}(1- p), &  \text{if $p\in [\frac{1}{a+b},1]$.}
\end{cases}$$
\end{theorem}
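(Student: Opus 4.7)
The plan is to exploit the twin structure of $x$ and $y$ at the root of $\mathcal{AT}((a,b))$. Both vertices have common degree $d_x = d_y = a+b-1$, and they share exactly the same neighbourhood apart from each other: the remaining $a-2$ root vertices together with all $b$ vertices of $V_2$ are common neighbours of $x$ and $y$. My first observation is that for every such common neighbour $z$ one has $\mu_x^p(z) = \mu_y^p(z) = (1-p)/(a+b-1)$, so the discrepancy between $\mu_x^p$ and $\mu_y^p$ is supported entirely on the pair $\{x,y\}$. A direct calculation gives $\mu_x^p(x) - \mu_y^p(x) = ((a+b)p-1)/(a+b-1)$ and the negative of this at $y$, so $\mu_x^p = \mu_y^p$ exactly at the critical idleness $p_0 = 1/(a+b)$, which accounts for the breakpoint in the claimed formula.

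To upper-bound $W_1$, I would use a single-edge transport: for $p \le p_0$ move the mass $(1-(a+b)p)/(a+b-1)$ from $y$ to $x$ across the edge $\{x,y\}$ and leave all common-neighbour mass in place, whereas for $p \ge p_0$ the reverse direction works. Each plan is feasible and transports exactly the magnitude of the discrepancy over distance $1$, so $W_1(\mu_x^p,\mu_y^p) \le |(a+b)p - 1|/(a+b-1)$.

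To certify equality, I would invoke Kantorovich duality (Theorem \ref{Kantorovich}). For $p \le p_0$ the potential $\phi$ defined by $\phi(y) = 1$, $\phi(x) = 0$, and $\phi(z) = 0$ for every common neighbour $z$ is plainly $1$-Lipschitz on every edge of $\mathcal{AT}((a,b))$ (the only edges where $\phi$ changes are those incident to $y$), and the Kantorovich sum collapses to $1 \cdot (\mu_x^p(y) - \mu_y^p(y)) = (1-(a+b)p)/(a+b-1)$, yielding the matching lower bound and hence $\kappa_p(x,y) = (a+b-2)/(a+b-1) + ((a+b)/(a+b-1))p$ on $[0,p_0]$. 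Swapping the roles of $x$ and $y$ handles $p \ge p_0$. Alternatively, as in the preceding theorems of this section, once the first case is in hand I could instead invoke \cite[Theorem 4.4]{BCLMP}: since $d_x = d_y = a+b-1$, the Ollivier-Ricci curvature is linear on $[p_0,1]$, and combining $\kappa_1(x,y) = 0$ with $\kappa_{p_0}(x,y) = 1$ (read off from the first case) forces $\kappa_p(x,y) = (a+b)(1-p)/(a+b-1)$ on that interval.

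I do not expect a real obstacle here: the twin neighbourhood structure of $x$ and $y$ collapses the entire Wasserstein problem to a one-edge transport, and the only bookkeeping needed is to track the sign of $(a+b)p - 1$ so that the direction of transport and the corresponding $1$-Lipschitz potential are chosen correctly in each case.
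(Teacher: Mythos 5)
Your proposal is correct, and it reaches the stated formula by a route that differs from the paper's in which tool carries the argument. The paper invokes the idleness-function structure theorem \cite[Theorem 5.3]{BCLMP} for the case $d_x=d_y$: this reduces everything to computing just two values, $\kappa_0(x,y)=\frac{a+b-2}{a+b-1}$ (a one-edge transport at $p=0$) and $\kappa_{1/(a+b)}(x,y)=1$ (since $\mu_x^{1/(a+b)}\equiv\mu_y^{1/(a+b)}$, as both measures are uniform on the common closed neighbourhood), after which the piecewise-linear formula is forced. You instead compute $W_1(\mu_x^p,\mu_y^p)$ directly for \emph{every} $p$, exploiting the same twin structure: the signed measure $\mu_x^p-\mu_y^p$ is supported on $\{x,y\}$ with mass $\pm\frac{(a+b)p-1}{a+b-1}$, so a single-edge transport gives the upper bound $\frac{|(a+b)p-1|}{a+b-1}$, and the indicator potential of $y$ (resp.\ of $x$) certifies it via Theorem \ref{Kantorovich}; your sign bookkeeping and the resulting $\kappa_p(x,y)=1-\frac{|(a+b)p-1|}{a+b-1}$ match the claimed two-piece formula exactly. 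Your version is more self-contained (the BCLMP theorem is only needed in your optional alternative ending, and even there your use of linearity on $[\frac{1}{a+b},1]$ together with $\kappa_{1/(a+b)}=1$ and $\kappa_1=0$ is sound), at the cost of carrying the explicit plan and potential through both idleness regimes; the paper's version outsources the piecewise-linear interpolation to the structure theorem and only ever touches the two easy endpoints.
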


\begin{proof}
Since $d_{x} = d_{y},$ by \cite[ Theorem 5.3]{BCLMP}, we have 
$$\kappa_{ p}(x,y) = \begin{cases} ((a+b-1)\kappa_{LLY}(x,y)-(a+b)\kappa_{0}(x,y)) p + \kappa_{0}(x,y), &   \text{if $p\in [0,\frac{1}{a+b}]$,}\\
(1- p)\kappa_{LLY}(x,y), &   \text{if $p \in [\frac{1}{a+b},1]$.}
\end{cases} $$
Therefore we will calculate $\kappa_{p}(x,y)$ for $p=0$ and $p=\frac{1}{a+b}.$ 
\\
\\
Observe that $\mu_{x}^{0}(y) = \frac{1}{a+b-1}$ and $0$ otherwise, and $\mu_{y}^{0}(x) = \frac{1}{a+b-1}$ and $0$ otherwise. Thus we have
$$W_{1}(\mu_{x}^{0},\mu_{y}^{0}) = \frac{1}{a+b-1},$$
and so
$$\kappa_{0}(x,y) = \frac{a+b-2}{a+b-1}.$$ 
Note that $$\mu_{x}^{\frac{1}{a+b}} \equiv \mu_{y}^{\frac{1}{a+b}} ,$$ so 
$$\kappa_{LLY}(x,y) = \frac{a+b}{a+b-1} \kappa_{\frac{1}{a+b}}(x,y) = \frac{a+b}{a+b-1}.$$
Substituting these values in to the above formula completes the proof.
\end{proof}

\begin{theorem}[Spherical inner edges of an antitree]
Let $1\leq a\leq b\leq c,$ $\{x,y\}$ a spherical inner edge of the antitree $\mathcal{AT}((a,b,c)),$ that is $x,y\in V_{2}.$ Then 
$$\kappa_{p}(x,y) = \begin{cases} \frac{a+b+c-2}{a+b+c-1} + \frac{a+b+c}{a+b+c-1}p &  \text{if $p \in [0,\frac{1}{a+b+c}]$,}\\
 \frac{a+b+c}{a+b+c-1}(1- p), &  \text{if $p\in [\frac{1}{a+b+c},1]$.}
\end{cases}$$
\end{theorem}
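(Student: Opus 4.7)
The plan is to mimic the argument for spherical root edges (Theorem \ref{spher}) almost verbatim, since the essential structure is the same: the two endpoints $x,y \in V_2$ of a spherical inner edge have identical degree $d_x = d_y = a+b+c-1$ (both are adjacent to all of $V_1$, all other vertices of $V_2$, and all of $V_3$). This lets me invoke \cite[Theorem 5.3]{BCLMP} to reduce the problem to computing just $\kappa_0(x,y)$ and $\kappa_{LLY}(x,y)$, after which the piecewise linear formula on the two intervals $[0, 1/(a+b+c)]$ and $[1/(a+b+c), 1]$ is forced by interpolation.

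The computation of $\kappa_0(x,y)$ is immediate: because $B_1(x)$ and $B_1(y)$ coincide (both equal $V_1 \cup V_2 \cup V_3$), the probability measures $\mu_x^0$ and $\mu_y^0$ agree on every common neighbour (each getting mass $1/(a+b+c-1)$), and differ only at the two vertices $x,y$ themselves. So one unit of mass $1/(a+b+c-1)$ must be transported from $y$ to $x$ over distance $1$, giving $W_1(\mu_x^0,\mu_y^0) = 1/(a+b+c-1)$ and thus $\kappa_0(x,y) = (a+b+c-2)/(a+b+c-1)$. A trivial $1$-Lipschitz potential (e.g.\ $\phi(x)=0$, $\phi(y)=1$, $\phi=0$ elsewhere wouldn't quite be Lipschitz, but $\phi(x)=-1/2$, $\phi(y)=1/2$ works, or one can simply observe that the transport plan is supported on edges) certifies optimality.

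For $\kappa_{LLY}(x,y)$ I would evaluate at the distinguished idleness $p^* = 1/(a+b+c) = 1/(d_x+1) = 1/(d_y+1)$. At this value both $\mu_x^{p^*}$ and $\mu_y^{p^*}$ place mass exactly $1/(a+b+c)$ on every vertex of $B_1(x) = B_1(y)$ and zero elsewhere, so $\mu_x^{p^*} \equiv \mu_y^{p^*}$ and therefore $\kappa_{p^*}(x,y) = 1$. By the linearity of $p \mapsto \kappa_p(x,y)$ on $[1/(a+b+c),1]$ (which holds because $d_x = d_y$, via the theorem at the end of Section \ref{section:ORcurv}), this determines the slope on the upper interval and yields $\kappa_{LLY}(x,y) = (a+b+c)/(a+b+c-1)$.

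The final step is a straightforward substitution of $\kappa_0(x,y)$ and $\kappa_{LLY}(x,y)$ into the two-part formula from \cite[Theorem 5.3]{BCLMP} exactly as in the proof of Theorem \ref{spher}, and a quick algebraic check that the coefficient of $p$ on $[0,1/(a+b+c)]$ simplifies to $(a+b+c)/(a+b+c-1)$. There is no real obstacle here; the only thing to double-check is that the argument for $W_1$ really reduces to moving a single lump of mass, i.e.\ that the "high-connectivity collapse" used implicitly throughout this section (replacing the full antitree with a much smaller quotient graph) is valid here too — but since the transport in question only involves the two endpoints $x$ and $y$ themselves, no collapse is even needed.
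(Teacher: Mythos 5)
Your proposal is correct and follows essentially the same route as the paper, whose proof of this theorem is literally ``the proof follows in the same way as in the proof of Theorem \ref{spher}'': use $d_x=d_y=a+b+c-1$ to invoke \cite[Theorem 5.3]{BCLMP}, compute $\kappa_0(x,y)=\frac{a+b+c-2}{a+b+c-1}$ from the fact that $\mu_x^0$ and $\mu_y^0$ differ only at $x$ and $y$, and get $\kappa_{LLY}(x,y)=\frac{a+b+c}{a+b+c-1}$ from $\mu_x^{1/(a+b+c)}\equiv\mu_y^{1/(a+b+c)}$. All your computations (degrees, the coincidence of the two measures at the idleness $1/(a+b+c)$, and the slope check) are accurate.
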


\begin{proof}
The proofs follows in the same way as in the proof of Theorem \ref{spher}.
\end{proof}

\newpage

\begin{appendices}

\section{Maple Calculations for Theorem \ref{thm:atabcde-BE}}
\label{app:thm:atabcde-BE}

In the \emph{normalized case}, the Maple code to construct the matrix
$A_{\rm{red}} = 4 \mu_x^2 \Gamma_{2,\rm{red}}(x)$ for $x \in V_3 \cong K_c$ of
${\mathcal{AT}}((a,b,c,d,e))$ is the following:

\begin{figure}[h]
\includegraphics[width=\textwidth]{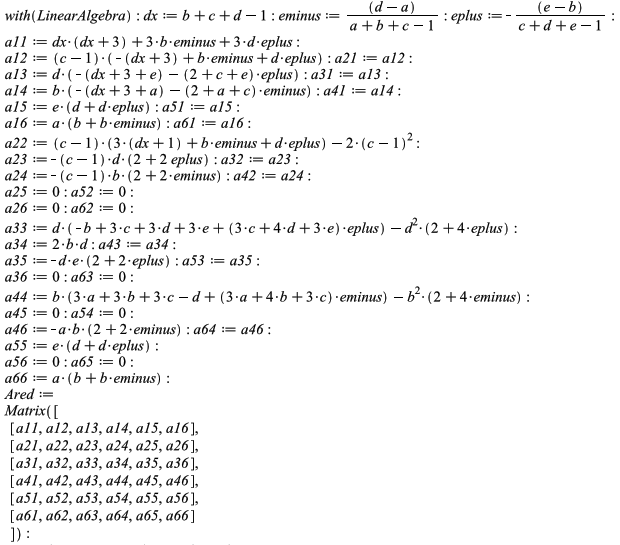}
\caption{Maple construction of $A_{\rm{red}}$ in the normalized case}
\label{fig:construct_Ared}
\end{figure}
 
For the generation of the coefficients of the charactestic polynomial
$\chi_n(t)$ of $A_{\rm{red}}$ for $a=n, b=n+1, c=n+2, d=n+3, e=n+4$,
see Figure \ref{fig:coeff-charpoly-norm}. Note that there are no
negative coefficients in the polynomials
$p_1(n), p_2(n), p_3(n), p_4(n)$ and $p_5(n)$.

\begin{figure}
\includegraphics[width=\textwidth]{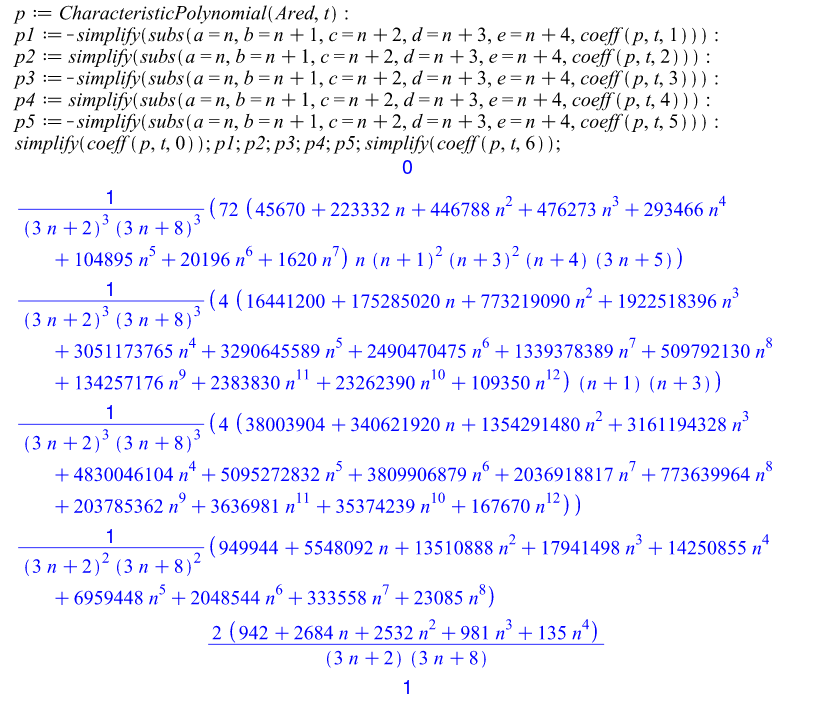}
\caption{Coefficients of $\chi_n(t) = \det(t {\rm{Id}}_6 - A_{\rm{red}})$, normalized case} \label{fig:coeff-charpoly-norm}
\end{figure}

The only modification of the above code in the \emph{non-normalized
  case} is to set the variables {\texttt{eminus}} and {\texttt{eplus}}
equal to $0$. The coefficients of $\chi_n(t)$ for
$a=n, b=n+1, c=n+2, d=n+3, e=n+4$ are given in Figure
\ref{fig:coeff-charpoly-nonnorm}. Again, all coefficients of $p_j(n)$,
$j=1,2,3,4,5$, are non-negative.

\begin{figure}
\includegraphics[width=\textwidth]{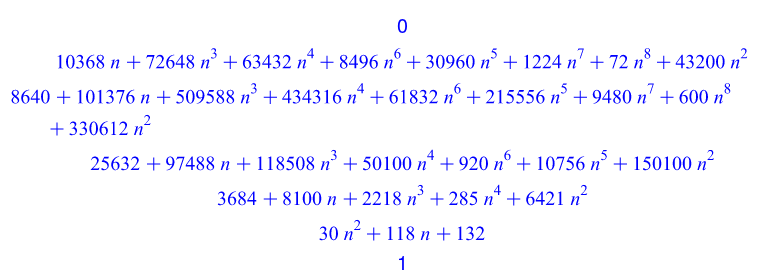}
\caption{Coefficients of $\chi_n(t) = \det(t {\rm{Id}}_6 - A_{\rm{red}})$, non-normalized case} \label{fig:coeff-charpoly-nonnorm}
\end{figure}

\section{Maple Calculations for Theorems \ref{thm:atbcde-BE} and \ref{thm:atcde-BE}}
\label{app:thm:atbcde-BE}

For the Maple calculations needed for the proofs of these theorems,
the code of Figure \ref{fig:construct_Ared} is used again, followed by
the code in Figure \ref{fig:otherthms} (in the \emph{normalized}
case). The reduced matrices $A_{\rm{red}}$ are here of dimension $5$
and $3$, respectively, and they can be extracted from the original
$6 \times 6$ matrix as submatrices with specific choices for
$a,b,c,d,e$. The crucial observation here is that the coefficients of
the respective characteristic polynomials of degree $5$ and $3$ are
alternating, guaranteeing that all non-zero roots are strictly
positive.
\begin{figure}
\includegraphics[width=\textwidth]{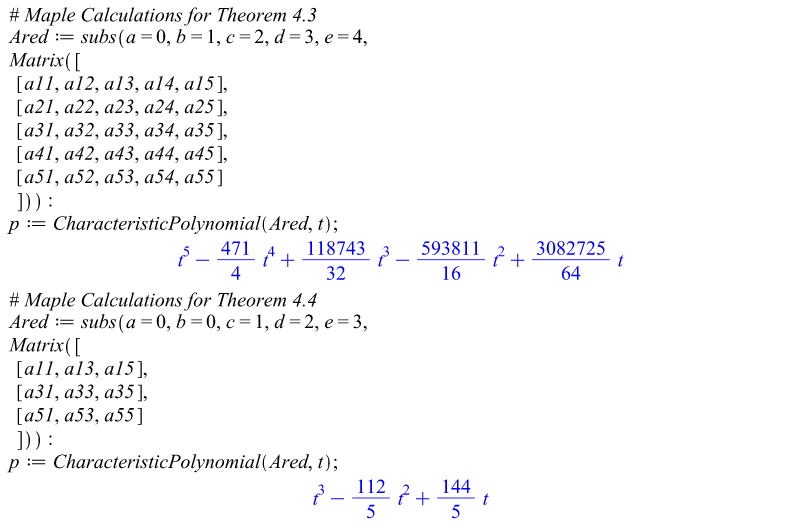}
\caption{Calculation of $\chi(t) = \det(t {\rm{Id}} - A_{\rm{red}})$ for Theorems \ref{thm:atbcde-BE} amd \ref{thm:atcde-BE}, normalized case} \label{fig:otherthms}
\end{figure} 
As before, the \emph{non-normalized case} is treated analogously with the
small modification to set the variables {\texttt{eminus}} and
{\texttt{eplus}} equal to $0$. This leads again to characteristic
polynomials with alternating coefficients, given in the proofs of the
theorems as
$$ \chi(t) = t^5 - 132 t^4 + 3684 t^3 - 25632 t^2 + 8640 t $$
and
$$ \chi(t) = t^3-44t^2+72t. $$

\section{Maple Calculations for Theorem \ref{thm:curv0BE}}
\label{app:thm:curv0BE}

Using the information about $(A_{ij}(\delta,n))$ in the proof of
Theorem \ref{thm:curv0BE}, the Maple code to calculate the relevant
polynomial $p_1(\delta,n)$ is given in Figure \ref{fig:curv0}.
\begin{figure}
\includegraphics[width=\textwidth]{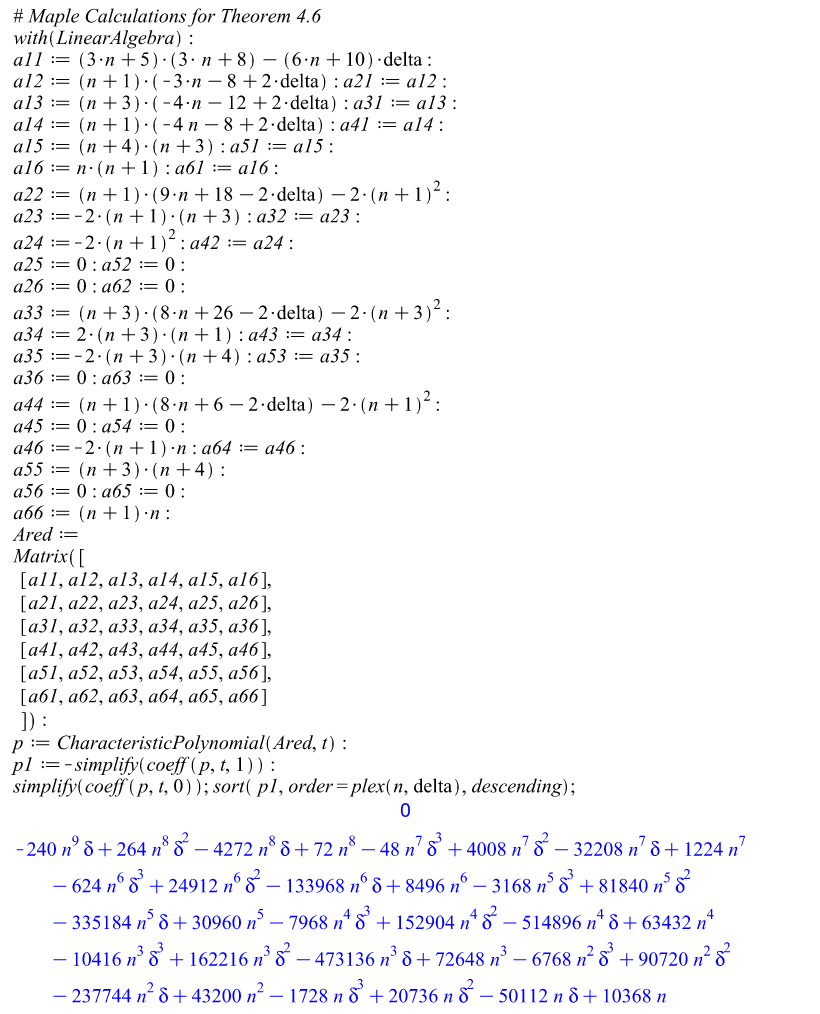}
\caption{Calculation of $p_1(\delta,n)$ in the proof of Theorem
  \ref{thm:curv0BE}} \label{fig:curv0}
\end{figure}

\end{appendices}


\begin{thebibliography}{9}

\bibitem{BKW15}
  \textsc{F. Bauer, M. Keller, and R.~K. Wojciechowski},\
  \textit{Cheeger inequalities for unbounded graph Laplacians}, J. Eur.
Math. Soc. (JEMS), 17(2):259--271, 2015.

\bibitem{BE85}
  \textsc{D. Bakry and M. \'Emery},\ 
  \textit{Diffusions  hypercontractives}, in S\'eminaire de probabilit\'es, 
{XIX}, 1983/84, Lecture Notes in Math. 1123, 117--206,
Springer, Berlin, 1985.

\bibitem{BCLMP}
  \textsc{D. Bourne, D. Cushing, S. Liu, F. M\"unch, and N. Peyerimhoff},\
  \textit{Ollivier-Ricci idleness functions of graphs}, 
  arXiv:1704.04398, (2016).

\bibitem{BK13} \textsc{J. Breuer and M. Keller},\
\textit{Spectral analysis of certain
spherically homogeneous graphs}, Oper. Matrices, 7(4):825--847,
2013.

\bibitem{Bro81}
  \textsc{R. Brooks},\
  \textit{A relation between growth and the spectrum of
the Laplacian}, Math. Z., 178(4):501--508, 1981.

\bibitem{CKLLS2017}
  \textsc{D. Cushing, R. Kangaslampi, V. Lipl\"ainen, S. Liu, and G.~W. Stagg},\
  \textit{The Graph Curvature Calculator and the curvatures of cubic graphs},
  arxiv:1712.03033, (2017).

\bibitem{CLP2016}
  \textsc{D. Cushing, S. Liu, and N. Peyerimhoff},\
  \textit{Bakry-{\'E}mery curvature functions of graphs}, arXiv:
  1606.01496, (2016).

\bibitem{DK88} \textsc{J. Dodziuk and L. Karp},\ \textit{Spectral and
    function theory for combinatorial Laplacians}, in Geometry of random motion ({I}thaca, {N}.{Y}.,
  1987), Contemp. Math. 73, 25--40, Amer.  Math. Soc., Providence,
RI, 1988.
  
\bibitem{DM06}
 \textsc{J. Dodziuk and V. Mathai},\
  \textit{Kato's inequality and
asymptotic spectral properties for discrete magnetic Laplacians
In The ubiquitous heat kernel}, in The ubiquitous heat kernel, Contemp. Math. 398, 69--81, Amer. Math. Soc., Providence, RI, 2006.

\bibitem{FLW14}
\textsc{ R.~L. Frank, D. Lenz, and D. Wingert},\ 
\textit{Intrinsic metrics for non-local symmetric Dirichlet forms and applications
to spectral theory}. J. Funct. Anal., 266(8):4765--4808, 2014.

\bibitem{Fol14}
\textsc{M. Folz},\
\textit{ Volume growth and stochastic completeness of
graphs}, Trans. Amer. Math. Soc., 366(4):2089--2119, 2014.

\bibitem{GHM12}
\textsc{ A. Grigor'yan, X. Huang, and J. Masamune}'\
 \textit{On stochastic completeness of jump processes}, Math. Z., 271(3-
4):1211--1239, 2012.

\bibitem{Gri99} 
\textsc{A. Grigor'yan},\
\textit{ Analytic and geometric background of re-
currence and non-explosion of the Brownian motion on Riemannian manifolds}, Bull. Amer. Math. Soc. (N.S.), 36(2):135--249,
1999.

\bibitem{GS11}
\textsc{S. Gol\'enia and Ch. Schumacher},\
\textit{The problem of deficiency indices for discrete Schr\"odinger operators on locally finite graphs}, J. Math. Phys., 52(6), 2011.

\bibitem{GS13}
\textsc{ S. Gol\'enia and Ch. Schumacher},\
\textit{Comment on 'The
problem of deficiency indices for discrete Schr\"odinger operators
on locally finite graphs'}, J. Math. Phys., 54(6), 2013.

\bibitem{HKW13}
\textsc{S. Haeseler, M. Keller, and R.~K. Wojciechowski},\
\textit{Volume growth and bounds for the essential spectrum
for Dirichlet forms}, J. Lond. Math. Soc. (2), 88(3):883--898, 2013.

\bibitem{HuaMunch2017}
\textsc{B. Hua and F. M\"unch},\
\textit{Ricci curvature on birth-death processes}, arXiv:1712.01494, (2017).

\bibitem{Hua14}
\textsc{X. Huang},\
\textit{A note on the volume growth criterion for
stochastic completeness of weighted graphs}, Potential Anal.,
40(2):117--142, 2014.

\bibitem{KLW13}
\textsc{M. Keller, D. Lenz, and R.~K. Wojciechowski},\
\textit{Volume growth, spectrum and stochastic completeness of infinite
graphs}, Math. Z., 274(3-4):905--932, 2013.

\bibitem{LLY11}
\textsc{Y. Lin, L. Lu, and S.-T. Yau},\
\textit{Ricci curvature of graphs}, Tohoku Math. J. (2), 63(4):605--627, 2011.

\bibitem{LiuMP2016}
\textsc{S. Liu, F. M\"unch, and N. Peyerimhoff},\
\textit{Bakry-{\'E}mery curvature and diameter bounds on graphs},
arXiv:1608.07778, (2016).

\bibitem{MunchWoj2017}
\textsc{F. M\"unch and R.~K. Wojciechowski},\
\textit{Ollivier Ricci curvature for general graph Laplacians: Heat
  equation, Laplacian comparison, non-explosion and diameter bounds},
arXiv:1712.00875, (2017).

\bibitem{Oll2009}
\textsc{Y. Ollivier},\
\textit{Ricci curvature of {M}arkov chains on metric spaces},
J. Funct. Anal., 256(3):810--864, 2009.

\bibitem{Web10}
\textsc{A. Weber},\
\textit{ Analysis of the physical Laplacian and the heat
flow on a locally finite graph}, J. Math. Anal. Appl., 370(1):146--158, 2010.

\bibitem{Woj11}
\textsc{R.~K. Wojciechowski},\
\textit{ Stochastically incomplete
manifolds and graphs}, in Random walks, boundaries and spectra, Progr. Probab. 64, 163--179, Birkh\"auser/Springer Basel AG, Basel, 2011.

\end{thebibliography}
\end{document}